\begin{document}

\title{Quickest Inference of Susceptible-Infected Cascades in Sparse Networks}

 \author{%
   \IEEEauthorblockN{Anirudh Sridhar\IEEEauthorrefmark{1},
                     Tirza Routtenberg\IEEEauthorrefmark{2},
                     and H. Vincent Poor\IEEEauthorrefmark{1}}
   \IEEEauthorblockA{\IEEEauthorrefmark{1}%
                     Department of Electrical and Computer Engineering,
                     Princeton University,
                     Princeton, NJ,
                     \{anirudhs, poor\}@princeton.edu}
   \IEEEauthorblockA{\IEEEauthorrefmark{2}%
                     Department of Electrical and Computer Engineering,
                     Ben-Gurion University of the Negev,\\
                     Beer-Sheva, Israel,
                     tirzar@bgu.ac.il}
}

\maketitle

\begin{abstract}
  We consider the task of estimating a network cascade as fast as possible.
  The cascade is assumed to spread according to a general Susceptible-Infected process with heterogeneous transmission rates from an unknown source in the network.
  While the propagation is not directly observable, noisy information about its spread can be gathered through multiple rounds of error-prone diagnostic testing. 
  We propose a novel adaptive procedure which quickly outputs an estimate for the cascade source and the full spread under this observation model.
  Remarkably, under mild conditions on the network topology, our procedure is able to estimate the full spread of the cascade in an $n$-vertex network, \emph{before} $\poly \log n$ vertices are affected by the cascade. We complement our theoretical analysis with simulation results illustrating the effectiveness of our methods.
\end{abstract}

\section{Introduction}

Large-scale networks are often vulnerable to \emph{cascading failures}, where anomalous behavior originating from a small set of nodes spreads rapidly to the rest of the network.
Left unchecked, network cascades can have a devastating impact on society.
This has been made painfully clear by the ongoing COVID-19 pandemic, as well as through other examples including the diffusion of misinformation in social networks \cite{conspiracy, Fourney2017,  tacchini2017some} and malware in cyber-physical networks \cite{KW91,  wang_viral_modeling, codered_propagation}. It is therefore of central importance to accurately locate network cascades \emph{before} too many nodes are compromised.

Unfortunately, it is often the case that information about a cascade is noisy in the early stages of its spread, which can make the cascade challenging to locate. 
Consider, for instance, the use of diagnostic testing to determine whether individuals in a population are infected with a contagious disease. As seen from the inaccuracies of early antigen-based rapid diagnostic tests for the detection of COVID-19 as well as other diseases \cite{who, rapid_tests}, such tests may have a significant false positive or negative rate. 
In recent work \cite{sridhar_poor_bayes, sridhar_poor_sequential, sridhar2022qse}, Sridhar and Poor designed sequential estimators for the \emph{source} of a network cascade which takes such uncertainties into account while also quickly coming to a decision so that mitigation measures (e.g., quaranting) can be applied in a timely manner. 
However, a serious weakness of their work is that rigorous guarantees on the performance of their estimators were only established for extremely simple cascade dynamics and network topologies.
In particular, it has remained unclear whether estimators with provable performance guarantees exist in practical settings. 

In the present work, we address this gap by designing a novel sequential estimation procedure for realistic network cascades in generic networks with bounded degree. 
We assume that the cascade is modeled by a heterogeneous Susceptible-Infected (SI) process that can describe multi-type agents and viral mutations \cite{allard2009heterogenous, alexander2010risk, eletreby2020effects, tian2021analysis, yagan2021modeling}.
As the cascade spreads, the behaviors of nodes are periodically monitored through error-prone diagnostic tests. 
From sequential observations of these noisy measurements, our procedure quickly outputs an accurate estimate for the cascade source, as well as the full spread of the cascade. Notably, under minimal assumptions on the cascade dynamics and network topology, we show that the \emph{full spread} of the cascade can be reliably estimated in an $n$-vertex graph before $\poly \log n$ vertices are affected. 
We validate these theoretical results through simulations, demonstrating that our estimator can quickly locate SI cascades in random regular graphs before a significant fraction of nodes is affected.

\subsection{Related work}

Our work contributes to the literature on the \emph{quickest inference of network cascades}, where the overarching goal is to leverage both the network topology and noisy, vertex-level signals to quickly infer aspects of the cascade. 
In \cite{zou2020qd, ZouVeeravalli2018, ZVLT2018}, Zou, Veeravalli, Li and Towsley derived near-optimal algorithms for detecting the emergence of a network cascade when the cascade spreads slowly through the network. 
See also \cite{Rovatsos2020quickest, rovatsos2021quickest, xian2022adaptive, rovatsos2021varying, zhang2022detection} for extensions of their initial work.
The most relevant work to ours is that of Sridhar and Poor \cite{sridhar_poor_bayes, sridhar_poor_sequential, sridhar2022qse}, who developed procedures to quickly estimate the source of a simple, deterministic cascade in networks that can be represented as lattices and regular trees. The present work expands upon the methods of \cite{sridhar2022qse} to design estimators for realistic cascades in generic networks with bounded degree.

We also mention a few fascinating directions on the inference of network cascades in different contexts.
In the seminal work of Shah and Zaman \cite{shah2010detecting, shah2011rumors}, the cascade source is estimated from a large, noiseless snapshot of the affected vertices in a tree (in contrast, the present work estimates the source in generic networks \emph{before} too many vertices are affected). Several authors have extended the initial ideas of \cite{shah2010detecting, shah2011rumors} in various fruitful directions; see, e.g., \cite{KhimLoh15, fanti2015spy, wang2014, ying_book, zhu_ying_1}. 
Another line of work approaches cascade detection and estimation by placing a small number of noiseless ``sensors" from which to obtain high-quality measurements of the cascade's impact \cite{CF2010, leskovec2007outbreak, adhikari2019monitoring, haldar2023temporal, heavey2022detection, batlle2022adaptive}. We study a complementary setting where many nodes are monitored but measurements can be quite noisy.

\subsection{Paper organization}

Section \ref{sec:notation} contains notational conventions we use throughout the paper.
Section \ref{sec:models_and_methods} details the cascade and diagnostic testing models, as well as adaptive estimators for the cascade source and full spread.
We study the performance of our estimator both theoretically and empirically in Section \ref{sec:performance_analysis},
and we conclude in Section \ref{sec:conclusion}.
The proofs of our main results can be found in the appendices.

\section{Notation}
\label{sec:notation}

We let $\mathbb{R}, \mathbb{R}_+, \mathbb{Z}, \mathbb{Z}_{\ge 0}$ denote the reals, the positive reals, the integers, and the non-negative integers, respectively.
For a finite set $S$, $|S|$ denotes the number of elements in the set.
Throughout the paper, we use standard asymptotic notation (e.g., $O(\cdot), o(\cdot)$).
Also, for two sequences $\{a_n \}_{n \ge 1}$ and $\{b_n \}_{n \ge 1}$, we say $a_n \lesssim b_n$ if $a_n \le (1 + o(1)) b_n$, where $o(1) \to 0$ as $n \to \infty$.

We represent a graph by a pair $G = (V(G), E(G))$, where $V(G)$ is the vertex set of the graph and $E(G)$ is the edge set. A graph may be either finite or infinite, which means that the vertex set may be finite or infinite in size.
The degree of a vertex $v$, denoted by $\deg_G(v)$, is the number of neighbors $v$ has in $G$. The maximum degree in the graph is denoted by $\Delta$.
For any $u,v \in V(G)$, we write $u \sim v$ to mean that $(u,v)$ is an edge in $G$.
The quantity $\dist(u,v)$ denotes the shortest path distance between $u$ and $v$ in $G$.
The quantity $\cN_v(t)$ is the $t$-hop neighborhood of $v$ in $G$, which consists of all vertices $u \in V(G)$ such that $\dist(u,v) \le t$. Notice that this definition allows for $t$ to take on real values in addition to integer values.
For a finite set $S \subseteq V(G)$, we define $\diam(S) : = \max_{u,v \in S} \dist(u,v)$.

\section{Models and Methods}
\label{sec:models_and_methods}

\subsection{Cascade model}
\label{subsec:cascade}

We model the cascade using a general version of the well-known SI process in networks. In this model, 
the cascade spreads in a stochastic manner and in continuous time. Initially, the cascade consists of just a single vertex $v^*$ (the source). 
Vertices that are adjacent in $G$ interact at random times, and the cascade spreads to a vertex $v$ when $v$ interacts with one of its affected neighbors.

In a bit more detail, the cascade dynamics can be described as follows. Let $\{ \lambda_{uv} \}_{u \sim v}$ be the collection of \emph{interaction rates}, where $\lambda_{uv} \in \mathbb{R}_+$ represents the frequency of interactions between vertices $u$ and $v$. We assume that interactions are symmetric, so that $\lambda_{uv} = \lambda_{vu}$. 
For any time index $t \ge 0$, let us also denote $\cascade(t) \subset V$ to be the set of vertices affected by the cascade at time $t$. When $t = 0$, $\cascade(0) = v^*$; in words, only the source is affected initially. For any $t \ge 0$ and $\delta > 0$ sufficiently small, we have for any vertex $v \in V \setminus \cascade(t)$ that
\begin{equation}
\label{eq:cascade_dynamics}
\p ( v \in \cascade(t + \delta) \vert \cascade(t) ) = \delta \sum_{u \in \cascade(t) : u \sim v} \lambda_{uv} + o ( \delta),
\end{equation}
where $o(\delta) \to 0$ at a faster rate than $\delta \to 0$.
In words, an unaffected vertex $v \in V \setminus \cascade(t)$ becomes affected by the cascade at a rate equal to the sum of the interaction rates with affected neighbors. 

In the special case where all interaction rates are equal ($\lambda_{uv} = \lambda$ when $u \sim v$), the SI model \eqref{eq:cascade_dynamics} has received significant attention in the last century. 
In the mathematical physics community, it is known to be equivalent to
first passage percolation with $\mathrm{Exp}(\lambda)$ edge weights \cite{fpp, fpp_si_equivalence}, and is also equivalent to the (Markovian)  contact process with no recovery \cite{contact_process, markovian_contact_processes}. The process \eqref{eq:cascade_dynamics} has also been used to derive well-known population-level models of viral spread (see, e.g., \cite[Chapter 9]{brauer2012mathematical}).
The usage of \emph{heterogeneous} interaction rates in \eqref{eq:cascade_dynamics} allows us to capture a variety of important scenarios beyond the basic models described above. For instance, the rate at which one individual may infect another can depend significantly on underlying health conditions (e.g, immuno-compromised individuals could be more easily infected), mask-wearing tendencies, and the type of viral strain. In particular, the dynamics \eqref{eq:cascade_dynamics} are closely related to recent work on mask-wearing and viral mutations in network cascades \cite{allard2009heterogenous, alexander2010risk, eletreby2020effects, tian2021analysis, yagan2021modeling}.\footnote{These works examine cases where the \emph{probability} of transmission between two neighbors can be heterogeneous. Since there is a one-to-one correspondence between the probability of transmission and the \emph{rate} of transmission between individuals (see, e.g., \cite{newman}), such models are closely related to \eqref{eq:cascade_dynamics} when the recovery rate is zero.}

A useful property of the cascade is that, after a sufficient amount of time passes, $\cascade(t)$ can be contained within two neighborhoods of the source. 
Before stating this property, let us define $\lambda_{min} : = \min_{u \sim v} \lambda_{uv}$ and $\lambda_{max} : = \max_{u \sim v} \lambda_{uv}$.
The proof can be found in Appendix \ref{sec:cascade_behavior}.

\begin{proposition}
\label{prop:Ek}
Let $\Delta$ be the maximum degree in $G$. Set
\begin{equation}
\label{eq:alpha_beta}
(\alpha, \beta) : = \left( \frac{\lambda_{min} }{12 \log \Delta}, 3 \Delta \lambda_{max} \right),
\end{equation}
and define the event 
\[
\cE_k : = \left \{ \forall t \ge k, \cN_{v^*}(\alpha t) \subseteq \cascade(t) \subseteq \cN_{v^*}(\beta t) \right \}.
\]
Then $\lim_{k \to \infty} \p( \cE_k ) = 1$.
\end{proposition}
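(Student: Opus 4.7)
The plan is to view the SI dynamics \eqref{eq:cascade_dynamics} as first-passage percolation: assign to each edge $(u,v)$ an independent clock $\tau_{uv} \sim \mathrm{Exp}(\lambda_{uv})$, so that the infection time of a vertex $w$ equals $T_w = \min_P \sum_{e \in P} \tau_e$, with the minimum taken over simple paths $P$ from $v^*$ to $w$. The event $\cE_k$ is the intersection of an \emph{outer} containment $\cascade(t) \subseteq \cN_{v^*}(\beta t)$ and an \emph{inner} containment $\cN_{v^*}(\alpha t) \subseteq \cascade(t)$, simultaneously in $t \ge k$. Since $\cascade(\cdot)$ is non-decreasing in $t$, it suffices to control both inclusions on the integer grid: using $\cascade(t) \subseteq \cascade(m+1)$ and $\cascade(t) \supseteq \cascade(m)$ for $t \in [m,m+1)$ costs only an $O(1/m)$ slack in the radii, after which I union-bound over $m \ge k$ and let $k \to \infty$.

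For the outer bound, if $\cascade(m+1)$ escapes $\cN_{v^*}(\beta m)$ then some simple path out of $v^*$ of length $\ell > \beta m$ has total weight at most $m+1$. The weight of a fixed such path is a sum of $\ell$ independent exponentials with rates at most $\lambda_{\max}$, hence stochastically dominated by $S_\ell$, a sum of $\ell$ i.i.d.\ $\mathrm{Exp}(\lambda_{\max})$ variables. The Chernoff estimate $\p(S_\ell \le m+1) \le (e\lambda_{\max}(m+1)/\ell)^\ell$, combined with the crude count of at most $\Delta^\ell$ simple paths of length $\ell$ from $v^*$, gives a per-$\ell$ union-bound contribution of $(e\Delta\lambda_{\max}(m+1)/\ell)^\ell$. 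With $\beta = 3\Delta\lambda_{\max}$, each such term is bounded by $(e/3 + o(1))^\ell$ for $\ell > \beta m$, so the geometric sum in $\ell$ decays exponentially in $m$.

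For the inner bound, take $v$ with $d := \dist(v^*,v) \le \alpha(m+1)$, fix a geodesic $P^*$ from $v^*$ to $v$, and note that $T_v$ is bounded above by the sum of the $d$ clocks along $P^*$, whose rates are all at least $\lambda_{\min}$. This sum is stochastically dominated by $S'_d$, a sum of $d$ i.i.d.\ $\mathrm{Exp}(\lambda_{\min})$ variables. The Poisson identity $\p(S'_d > m) = \p(\mathrm{Poi}(\lambda_{\min} m) < d)$ together with the Chernoff bound $\p(\mathrm{Poi}(\mu) < d) \le e^{-\mu}(e\mu/d)^d$ (for $d \le \mu$), and a union bound over the at most $2\Delta^{\alpha(m+1)}$ vertices in $\cN_{v^*}(\alpha(m+1))$, produces
\[
\p\!\left( \cN_{v^*}(\alpha(m+1)) \not\subseteq \cascade(m) \right) \le 2\bigl(12 e \Delta \log \Delta\bigr)^{\alpha(m+1)}\, e^{-\lambda_{\min} m}.
\]
The choice $\alpha = \lambda_{\min}/(12 \log \Delta)$ is tuned precisely so that the first factor contributes only $\exp\!\bigl((1/12 + o(1))\lambda_{\min} m\bigr)$, leaving a net bound of $e^{-c\lambda_{\min} m}$ for some constant $c > 0$ (valid for every $\Delta \ge 2$).

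Summing the two failure probabilities over $m \ge k$ then gives $\p(\cE_k^c) = O(e^{-c' k}) \to 0$. The main technical obstacle is the inner bound: the combinatorial factor $\Delta^d$ of vertices at hop-distance $d$ is exponentially large, which is exactly what forces $\alpha$ to absorb the extra $1/\log \Delta$ factor, and the Poisson lower tail must be estimated with the multiplicative bound $(e\mu/d)^d$ rather than a Gaussian-type estimate, since $d$ is only a $1/(12 \log \Delta)$-fraction of the mean $\mu = \lambda_{\min} m$. The outer bound is a more standard union-over-paths estimate, and the discretization in $t$ rides entirely on the monotonicity of $\cascade(\cdot)$.
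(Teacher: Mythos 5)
Your proposal is correct and takes essentially the same route as the paper: both pass to the first-passage-percolation representation (exponential edge clocks), split $\cE_k$ into an inner containment $\cN_{v^*}(\alpha t)\subseteq\cascade(t)$ and an outer containment $\cascade(t)\subseteq\cN_{v^*}(\beta t)$, control each via a Chernoff bound on sums of independent exponentials, and close with a union bound over the at most $2\Delta^{\alpha t}$ vertices (inner) or $\Delta^\ell$ paths of length $\ell>\beta t$ (outer) followed by geometric summation over $t\ge k$. The only differences are cosmetic-technical: for the tail estimates you use the Gamma--Poisson duality $\p(S_d>m)=\p(\mathrm{Poi}(\lambda m)<d)$ together with the standard Poisson Chernoff bound $e^{-\mu}(e\mu/d)^d$, whereas the paper proves these bounds directly from the moment generating function of the exponential (inner, with $\theta=\lambda_{\min}/2$) and from the Gamma density plus Stirling's formula (outer, Lemma~\ref{lemma:weight_lower_bound}); the Poisson route is arguably tidier and avoids the Stirling step, but yields the same $\Delta$-dependence and hence the same choice of $\alpha,\beta$. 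You also explicitly discretize to the integer grid via monotonicity of $\cascade(\cdot)$; the paper sums over integer $s\ge k$ without remarking on this, so your treatment is if anything a bit more careful.
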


\subsection{Observation model}

For each integer value of $t$ and for each $v \in V$, we assume an error-prone diagnostic test is administered to $v$ with probability $p$. The diagnostic test is correct (that is, it outputs a value of $1$ if $v \in \cascade(t)$ and $-1$ if $v \notin \cascade(t)$) with probability $1 - \epsilon$. With probability $\epsilon$, the diagnostic test is incorrect (both false alarm and misdetection errors). Formally, we denote the measurement corresponding to $v$ at time $t$ by $Y_v(t)$, where $Y_v(t) = 0$ if no test is administered to $v$ and $Y_v(t)$ is equal to the output of the diagnostic test (either $-1$ or $+1$) otherwise. In particular, if $v \notin \cascade(t)$, we have that
\begin{align}
\label{Q_minus}
\p( Y_v(t) = 0 \vert \cascade(t) ) & = 1 - p,\nonumber \\
 \p( Y_v(t) = -1 \vert \cascade(t) ) & = p (1 - \epsilon) ,\nonumber \\
 \p( Y_v (t) = +1 \vert \cascade(t) ) & = p \epsilon.
\end{align}
As a shorthand, we say that $Y_v(t) \sim Q^-$, where $Q^-$ is the probability mass function (PMF) described in \eqref{Q_minus}. Similarly, if $v \in \cascade(t)$, we have that
\begin{align}
\label{Q_plus}
\p( Y_v(t) = 0 \vert \cascade(t) ) & = 1 - p, \nonumber\\
 \p( Y_v(t) = -1 \vert \cascade(t) ) & = p \epsilon ,\nonumber \\
 \p( Y_v (t) = +1 \vert \cascade(t) ) & = p(1- \epsilon).
\end{align}
In this case, we say that $Y_v(t) \sim Q^+$, where $Q^+$ is the PMF described in \eqref{Q_plus}. 
For brevity, we will also denote $\mathbf{Y}(t) : = \{ Y_w(t) \}_{w \in V(G)}$ to be the collection of all signals collected at a positive integer time $t$.

\subsection{Source estimation}

Any estimation procedure can be represented by a pair $(T, \widehat{\mathbf{v}})$. Here, $T \in \Z_{\ge 0}$ is an integer-valued, data-dependent stopping time and $\widehat{\mathbf{v}} = \{ \widehat{v}(t) \}_{t \ge 0}$ is a sequence of estimators for the cascade source, where $\widehat{v}(t) \in V(G)$ is a measurable function of the data, i.e., of the signals observed until time $t$. The procedure $(T, \widehat{\mathbf{v}})$ collects signals until the stopping time $T$ is reached, at which point $\widehat{{v}}(T)$ is outputted as an estimate for the cascade source. 
We will also assume that there is a finite \emph{candidate set} $U \subseteq V(G)$, which represents a known set of potential source vertices (i.e., it is known that $v^* \in U$). When $G$ is a finite graph, we may set $U = V(G)$, though in our theoretical results, we will consider infinite graphs as well. 
Our goal is to find an estimator $\widehat{v}(T)$ for the cascade source that enjoys low estimation error, as measured by the graph distance to the source, $\dist(v^*, \widehat{v}(T))$, while also ensuring that a decision is reached as fast as possible (i.e., $T$ is not too large) to prevent the cascade from affecting too many vertices. 
As we shall see in Section \ref{subsec:full_cascade_estimation}, source estimation can also be used as a subroutine to accurately estimate the \emph{full spread} of the cascade.

Our source estimator is based on two guiding principles:
\begin{enumerate}
    \item Vertices close to the source should have many positive cases in their local neighborhood.
    \item If the number of positive cases in a neighborhood of $v$ is significantly greater than the number of positive cases in a neighborhood of $u$, then $v$ is more likely to be close in proximity to the source.
\end{enumerate}

Following the first guiding principle, we start by constructing a \emph{score function} for each vertex based on the positive and negative cases in a local neighborhood. For a vertex $v \in U$ and any positive integer time index $t$, define
\[
Z_v(t) : = \sum_{s = 0}^t \sum_{w \in \cN_v(\alpha s)} Y_w(s).
\]
In words, $Z_v(t)$ is the cumulative sum of \emph{net positive cases} in the local neighborhood $\cN_v(\alpha s)$, for $0 \le s \le t$. The parameter $\alpha$ is chosen as in \eqref{eq:alpha_beta} so that, if $v^* = v$, the vertices in $\cN_v(\alpha s)$ are likely all affected by the cascade for $s$ sufficiently large in light of Proposition \ref{prop:Ek}. 
On the other hand, if $\dist(v^*, v) \ge (\alpha + \beta )t$ (with $\beta$ defined in \eqref{eq:alpha_beta}), then Proposition \ref{prop:Ek} implies that $\cN_v(\alpha t)$ likely has no overlap with $\cC(t)$. It follows that the differences between score functions, measured by $Z_v(t) - Z_u(t)$, is most informative in estimating the source when $\dist(u,v) \ge (\alpha + \beta) t$.

Following the second guiding principle, our estimation procedure will collect data until the score of some vertex $v \in U$ is significantly larger than the score of all vertices that are sufficiently far from $v$.
Specifically, for each $v \in U$ we define the stopping time $T^{\alpha \beta}(v)$, which halts at the first time $t \ge 0$ when the following condition is satisfied:
\begin{equation}
\label{eq:Tv_stopping_time}
Z_v(t) - Z_u(t) \ge \frac{2 \log |U| }{\log \left( \frac{1 - \epsilon}{\epsilon} \right)}, \hspace{0.2cm} \forall u \in U : \dist(u,v) \ge (\alpha + \beta)t.
\end{equation}
Our estimation procedure is formally given by $(T^{\alpha \beta}, \widehat{\mathbf{v}} )$, where $T^{\alpha \beta} : = \min_{v \in U} T^{\alpha \beta}(v)$ and $\widehat{v}(T^{\alpha \beta}) \in \argmin_{v \in U} T^{\alpha \beta}(v)$. 

\begin{remark}
An edge case in the description of $T^{\alpha \beta}(v)$ is that when $t > \dist(u,v) / (\alpha + \beta)$, the condition in \eqref{eq:Tv_stopping_time} is vacuous. Consequently, if none of the stopping times halt for any $t \le \diam(U) / (\alpha + \beta)$, we set $T^{\alpha \beta} = \diam(U) / (\alpha + \beta)$ and $\widehat{v}(T^{\alpha \beta})$ is an arbitrary vertex from $U$. However, as our results show, it is quite unlikely that such an edge case would occur (see Theorem \ref{thm:main_performance_analysis}).
\end{remark}

\begin{remark}
\label{remark:distance_bound}
The threshold for $Z_v(t) - Z_u(t)$ in \eqref{eq:Tv_stopping_time} is carefully chosen so that $\dist(v^*, \widehat{v}(T^{\alpha \beta})) \le (\alpha + \beta)T^{\alpha \beta}$ with probability tending to 1 as $|U| \to \infty$.
For details, see Lemma \ref{lemma:estimation_tail_bound}.
\end{remark}

\begin{remark}
In \cite{sridhar_poor_sequential, sridhar2022qse} it was noted that source estimation can be viewed as a sequential multi-hypothesis testing problem, for which a procedure based on the computation of log-likelihood ratios of the observations achieves optimal performance. Indeed, in the simple case where $\cascade(t) = \cN_{v^*}(t)$ and $\alpha = 1$, $Z_v(t) - Z_u(t)$ is proportional to the log-likielihood ratio between the measures $\p ( \cdot \vert v^* = v)$ and $\p ( \cdot \vert v^* = u)$. However, computing log-likelihood ratios for the more realistic, stochastic cascade models considered in this work would require us to integrate over all possible realizations of the cascade. We instead use $Z_v(t) - Z_u(t)$ as a proxy for the log-likelihood ratio, which circumvents these issues, while also achieving similar performance guarantees to the likelihood-based procedure of \cite{sridhar_poor_sequential, sridhar2022qse}.
\end{remark}

\subsection{From source estimation to full cascade estimation}
\label{subsec:full_cascade_estimation}

A natural and important goal is also to quickly estimate the \emph{full spread} of the cascade, rather than just the source. This goal can be readily achieved using the procedure $(T^{\alpha \beta}, \widehat{\mathbf{v}})$ as a subroutine. Indeed, recall from Proposition \ref{prop:Ek} that if $T^{\alpha \beta}$ is sufficiently large, then $\cascade(T^{\alpha \beta}) \subseteq \cN_{v^*}(\beta T^{\alpha \beta})$ with high probability. If $\dist(v^*, \widehat{v}(T^{\alpha \beta})) \le (\alpha + \beta) T^{\alpha \beta}$ as suggested by Remark \ref{remark:distance_bound}, it follows that 
\begin{equation}
\label{eq:cascade_estimator}
\cascade(T^{\alpha \beta}) \subseteq \cN_{\widehat{v}(T^{\alpha \beta})} ((\alpha + 2 \beta) T^{\alpha \beta} ) =: \widehat{C}(T^{\alpha \beta}),
\end{equation}
with high probability. Hence the estimator $\widehat{\cascade}(T^{\alpha \beta})$ for the cascade spread \emph{fully contains} the true spread in this case. This intuition is confirmed in Theorem \ref{thm:main_performance_analysis} below.

\section{Performance analysis}
\label{sec:performance_analysis}

\subsection{Theoretical results}

Although our algorithm applies to \emph{any} graph with bounded degree, in our theoretical results we focus on a general class of \emph{infinite} graphs. Such graphs capture scenarios where the cascade is small relative to the full network size without overly complicating the mathematical analysis,\footnote{In finite graphs, the cascade will spread to all the vertices in finite time, at which point no new information about the source location can be learned from the data. The study of infinite graphs allows us to avoid such ``boundary effects".}
though we expect that our results also hold for finite graphs with sufficiently large diameter.
Formally, we assume the following:

\begin{assumption}
Assume that $G$ has infinitely many vertices, is connected, and has a finite maximum degree $\Delta$.
\end{assumption}

We next define a few key quantities. For a non-negative integer $t$ and a vertex $v \in V(G)$, define $f_v^\alpha (t) : = \sum_{s = 0}^t | \cN_{v}(\alpha s) |$ as well as its inverse function $F_v^{\alpha}$.
It turns out that $f_{v^*}^\alpha$ and $F_{v^*}^\alpha$ play a fundamental role in the performance guarantees of our estimator. 
Indeed, notice from the definition of the score functions that, conditionally on the cascade evolution $\boldsymbol{\cascade} : = \{ \cascade(s) \}_{s \ge 0}$, 
\begin{multline*}
\frac{\E [ Z_{v^*}(t) \vert \boldsymbol{\cascade} ]}{p(1 - 2 \epsilon)}  =  \sum_{s = 0}^t \left( | \cN_{v^*}(\alpha s) \cap \cascade(s) | -  | \cN_{v_*} (\alpha s) \setminus \cascade(s) | \right) \\
= \sum_{s = 0}^t | \cN_{v^*}(\alpha s) | - 2 \sum_{s = 0}^t | \cN_{v_*}(\alpha s) \setminus \cascade(s) | \sim f_{v^*}^\alpha(t),
\end{multline*}
where the final asymptotic expression holds as $t \to \infty$, with high probability in light of Proposition \ref{prop:Ek}. 
On the other hand, if $u \in V(G)$ satisfies $\dist(v^*, u) > (\alpha + \beta)t$, then the vertices in $\cN_u(\alpha t)$ are unaffected by the cascade at time $t$ with high probability (see Proposition \ref{prop:Ek}), hence $\E [ Z_u(t) \vert \boldsymbol{\cascade} ] \le 0$. 
It follows that $\E [ Z_{v^*}(t) - Z_u(t) \vert \boldsymbol{\cascade}] \ge p(1 - 2 \epsilon) f_{v^*}^\alpha (t)$.
If $Z_v(t) - Z_u(t)$ concentrates around its expectation, we see that it exceeds the threshold in \eqref{eq:Tv_stopping_time} when $f_{v^*}^\alpha(t) \gtrsim \log |U|$, or equivalently when $t \lesssim F_{v^*}^\alpha( \log |U|)$. This intuition is confirmed in our first main result below.

\begin{theorem}
\label{thm:main_performance_analysis}
Let $(\alpha, \beta)$ be set according to \eqref{eq:alpha_beta}
and let $U \subset V(G)$ be any finite candidate set of potential source vertices. Then, with probability tending to 1 as $|U| \to \infty$, the following hold:
\begin{enumerate}
    \item \label{item:thm_T}
    $
    T^{\alpha \beta} \le  F_{v^*}^\alpha \left( \frac{ 15  \log |U| }{p (1 - 2 \epsilon)^2} \right)
    $
\\
    \item \label{item:thm_dist}
    $
    \dist(v^*, \widehat{v}(T^{\alpha \beta}) )  \le (\alpha + \beta) F_{v^*}^\alpha \left( \frac{ 15  \log |U| }{p (1 - 2 \epsilon)^2} \right)
    $
\\
    \item \label{item:thm_C}
    $
    \cascade(T^{\alpha \beta} )  \subseteq \widehat{\cascade}(T^{\alpha \beta} )
    $
\end{enumerate}
\end{theorem}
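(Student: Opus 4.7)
The plan is a two-layer high-probability argument. First I would condition on the event $\cE_k$ from Proposition~\ref{prop:Ek}, choosing $k$ large enough that $\p(\cE_k^c)$ is arbitrarily small; this reduces the random cascade geometry to the deterministic sandwich $\cN_{v^*}(\alpha s) \subseteq \cascade(s) \subseteq \cN_{v^*}(\beta s)$ for $s \ge k$. Second, I would concentrate the noisy score differences $Z_v(t) - Z_u(t)$ around their conditional means given $\boldsymbol{\cascade}$. Set $t^\star := F_{v^*}^\alpha\!\left( \frac{15 \log |U|}{p(1 - 2\epsilon)^2} \right)$ and $\tau := \frac{2\log|U|}{\log((1-\epsilon)/\epsilon)}$. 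Since $G$ is infinite with bounded degree, $f_{v^*}^\alpha$ is unbounded, so $t^\star \ge k$ once $|U|$ is large. Item~\ref{item:thm_C} will follow deterministically from items~\ref{item:thm_T}--\ref{item:thm_dist} together with $\cE_k$, so the work lies in the first two items.

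\textbf{Concentration lemma and item \ref{item:thm_T}.} The analytical core would be the following claim: with probability $1 - o(1)$, for all integer $t \in [k, t^\star]$ and all $u \in U$ with $\dist(u, v^*) \ge (\alpha+\beta) t$,
\[
Z_{v^*}(t) - Z_u(t) \ge \frac{1}{2} p(1 - 2\epsilon) f_{v^*}^\alpha(t).
\]
On $\cE_k$, for $s \ge k$ one has $\cN_{v^*}(\alpha s) \subseteq \cascade(s)$ and, since $\dist(u, v^*) \ge (\alpha+\beta)s$, also $\cN_u(\alpha s) \cap \cascade(s) = \emptyset$; this yields $\E[Z_{v^*}(t) - Z_u(t) \vert \boldsymbol{\cascade}] \ge p(1-2\epsilon)\bigl(f_{v^*}^\alpha(t) + f_u^\alpha(t)\bigr) - O(f_{v^*}^\alpha(k) + f_u^\alpha(k))$. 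The random variables $\{Y_w(s)\}_{w,s}$ are mutually independent and bounded in $[-1,1]$ conditional on $\boldsymbol{\cascade}$, so a Hoeffding-type bound on $Z_{v^*}(t) - Z_u(t)$ followed by a union bound over the $O(|U|^2 t^\star)$ relevant events would suffice, provided the signal $p(1-2\epsilon) f_{v^*}^\alpha(t)$ exceeds the fluctuation $O\bigl(\sqrt{(f_{v^*}^\alpha(t) + f_u^\alpha(t)) \log|U|}\bigr)$; the coefficient $15$ in $t^\star$ is calibrated precisely to arrange this and to ensure $\frac{1}{2} p(1-2\epsilon) f_{v^*}^\alpha(t^\star) \ge \tau$. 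Applied at $t = t^\star$, the claim implies the stopping condition \eqref{eq:Tv_stopping_time} for $v = v^*$ is already met, so $T^{\alpha\beta} \le T^{\alpha\beta}(v^*) \le t^\star$.

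\textbf{Items \ref{item:thm_dist} and \ref{item:thm_C}.} For item~\ref{item:thm_dist} I would argue by contradiction. Suppose $\widehat{v} := \widehat{v}(T^{\alpha\beta})$ satisfies $\dist(\widehat{v}, v^*) > (\alpha+\beta) T^{\alpha\beta}$. Then \eqref{eq:Tv_stopping_time} applied at $\widehat{v}$ with test vertex $u = v^*$ demands $Z_{v^*}(T^{\alpha\beta}) - Z_{\widehat{v}}(T^{\alpha\beta}) \le -\tau < 0$, directly contradicting the concentration lemma applied with $u = \widehat{v}$ (which is admissible since $\widehat{v}$ is far from $v^*$). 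The edge case $T^{\alpha\beta} < k$ is easy: since $|Z_v(t) - Z_u(t)| \le f_v^\alpha(t) + f_u^\alpha(t)$ is a bounded constant for $t < k$ while $\tau \to \infty$ as $|U| \to \infty$, no stopping can occur in this regime for $|U|$ large. For item~\ref{item:thm_C}, on $\cE_k$ Proposition~\ref{prop:Ek} gives $\cascade(T^{\alpha\beta}) \subseteq \cN_{v^*}(\beta T^{\alpha\beta})$, and combining with item~\ref{item:thm_dist} and the triangle inequality places every affected vertex within distance $(\alpha+2\beta) T^{\alpha\beta}$ of $\widehat{v}$, which is exactly $\widehat{\cascade}(T^{\alpha\beta})$.

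\textbf{Main obstacle.} The hard part will be the uniform concentration over the $O(t^\star)$ time steps and $O(|U|^2)$ vertex pairs, combined with the fact that the signal term $\E[Z_{v^*}(t) - Z_u(t) \vert \boldsymbol{\cascade}]$ itself depends on the random geometry of $\cascade(s)$. Proposition~\ref{prop:Ek} is the key tool that disentangles these two sources of randomness: once the cascade is sandwiched between two deterministic balls, the conditional concentration of $Z_{v^*} - Z_u$ reduces to a standard Chernoff bound on independent bounded summands, and the constant $15/(p(1-2\epsilon)^2)$ in $t^\star$ is exactly what is needed so that the signal beats both the stopping threshold $\tau$ and the Chernoff fluctuation tail after the $O(\log|U|)$ union-bound penalty.
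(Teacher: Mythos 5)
Your high-level architecture (condition on $\cE_k$ from Proposition~\ref{prop:Ek}, reduce to a deterministic sandwich, then concentrate $Z_{v^*}-Z_u$) matches the paper, and your derivations of Items~\ref{item:thm_T} and~\ref{item:thm_C} from the concentration lemma and Item~\ref{item:thm_dist} are sound. However, there is a genuine gap in the concentration step, and it is precisely the spot you flag as the ``main obstacle.''

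First, for Item~\ref{item:thm_T} your ``Hoeffding-type'' fluctuation $O(\sqrt{N\log|U|})$ with $N=f_{v^*}^\alpha(t)+f_u^\alpha(t)$ is off by a factor of $p$: it requires signal $p(1-2\epsilon)N \gtrsim \sqrt{N\log|U|}$, i.e.\ $N \gtrsim \log|U|/(p^2(1-2\epsilon)^2)$, but $f_{v^*}^\alpha(t^\star) = 15\log|U|/(p(1-2\epsilon)^2)$ only provides a factor $1/p$, not $1/p^2$. The paper gets the correct $1/p$ scaling by using Bernstein (Lemma~\ref{lemma:M_concentration}), exploiting that $\mathrm{Var}(Y_w(s)) \le p$ rather than just $|Y_w(s)|\le 1$. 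That gap is fixable but already shows the ``Hoeffding-type'' bound is too crude.

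Second, and more seriously, your argument for Item~\ref{item:thm_dist} requires bounding $\p\bigl(Z_{v^*}(T^{\alpha\beta})-Z_{\widehat v}(T^{\alpha\beta}) \le -\tau\bigr)$ where $T^{\alpha\beta}$ is a random stopping time that can be far smaller than $t^\star$. You propose a union bound over all $t\in[k,t^\star]$ and $u\in U$, needing each $\p\bigl(Z_{v^*}(t)-Z_u(t)\le-\tau\bigr)$ to be $o(1/(|U|t^\star))$. A sub-Gaussian/Hoeffding or even Bernstein bound cannot deliver this uniformly in $t$: at the worst $N$ (roughly $N\approx\tau/(p(1-2\epsilon))$) the Hoeffding exponent bottoms out at about $-2p(1-2\epsilon)\tau = -4p(1-2\epsilon)\log|U|/\log\tfrac{1-\epsilon}{\epsilon}$, which is strictly larger than $-\log|U|$ whenever $p$ is small or $\epsilon$ is small (e.g.\ $\epsilon = 0.01$, any $p<1$). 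So the union bound over $u\in U$ fails in part of the allowed parameter range. The paper avoids this by recognizing that $(\epsilon/(1-\epsilon))^{M_{vu}^\alpha(t)}$ is an exact martingale (Lemma~\ref{lemma:M_martingale}) and invoking the Optional Stopping Theorem at the data-dependent time, which yields the bound $\bigl(\tfrac{\epsilon}{1-\epsilon}\bigr)^{\tau - 2f_{v^*}^\alpha(k)} = c_{k,\epsilon}|U|^{-2}$ \emph{exactly}, uniformly in $t$ and independently of $p$. This is the key missing idea: the threshold $\tau = 2\log|U|/\log\tfrac{1-\epsilon}{\epsilon}$ is calibrated so that the \emph{exponentially tilted} tail bound at tilting parameter $\log\tfrac{1-\epsilon}{\epsilon}$ equals $|U|^{-2}$ on the nose, and neither Hoeffding nor Bernstein recovers this tilting. (If you insist on a union bound over time rather than OST, you could still make the argument go through, but you would need to replace the Hoeffding step with this specific Chernoff/tilting bound; as written, your proposal does not close.)
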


The intuition behind Item \ref{item:thm_T} can be found in the discussion preceding the theorem statement. Notice in particular that as the testing frequency decreases ($p \to 0$) or as testing errors become large ($\epsilon \to 1/2$), the upper bound for $T^{\alpha \beta}$ increases.
Item \ref{item:thm_dist} essentially follows from the choice of thresholds in \eqref{eq:Tv_stopping_time}. As discussed in Remark \ref{remark:distance_bound}, the thresholds ensure that $\dist(v^*, \widehat{v}(T^{\alpha \beta}) \le (\alpha + \beta) T^{\alpha \beta}$, and the statement in the theorem follows from substituting the bound on $T^{\alpha \beta}$ given in Item \ref{item:thm_T}.
Item \ref{item:thm_C} shows that the cascade estimator (defined in \eqref{eq:cascade_estimator}) contains no false negatives. 
The full proof details can be found in Appendix \ref{sec:proof_of_thm}.

It is challenging to obtain a bound on the size of the estimated set in the most general setting of bounded degree graphs. 
Fortunately, under a very mild structural condition on the topology of $G$, we can show that the size of $\widehat{\cascade}(T^{\alpha \beta})$ is at most $\poly \log |U|$ -- an exponential reduction from the initial $|U|$ potential locations for the cascade. This is formalized in the following corollary; the proof is in Appendix \ref{sec:proof_of_thm}.

\begin{corollary}
\label{cor:cascade_estimator_size}
Assume the same conditions as Theorem \ref{thm:main_performance_analysis}, and 
suppose that there exist constants $q,r \ge 1$ such that
\begin{equation}
\label{eq:structural_constraint}
| \cN_u(t) | \le q | \cN_v(t) |^r, \qquad \forall u,v \in V(G), \forall t \ge 0.
\end{equation}
Then with probability tending to 1 as $|U| \to \infty$, there is a constant $c = c(\alpha, \beta, q, r)$ such that $|\widehat{C}(T^{\alpha \beta})| \le \log^c |U|$.
\end{corollary}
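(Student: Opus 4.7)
The plan is to condition on the high-probability event of Theorem \ref{thm:main_performance_analysis}, extract a bound on the ``small'' ball $\cN_{v^*}(\alpha T^{\alpha\beta})$ directly from the time bound in Item \ref{item:thm_T}, and then bootstrap to a bound on the ``large'' ball $\cN_{v^*}((\alpha+2\beta) T^{\alpha\beta})$ via a simple inductive covering argument that invokes the structural constraint \eqref{eq:structural_constraint}.

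Setting $M := 15 \log |U| / (p(1-2\epsilon)^2)$, Item \ref{item:thm_T} yields $T^{\alpha\beta} \le F_{v^*}^\alpha(M)$ with high probability. Since $f_{v^*}^\alpha$ is non-decreasing and $|\cN_{v^*}(\alpha T^{\alpha\beta})|$ appears as a single summand in $f_{v^*}^\alpha(T^{\alpha\beta})$, inverting gives $|\cN_{v^*}(\alpha T^{\alpha\beta})| \le M$ (up to additive $O(1)$ boundary terms from the discrete generalized inverse, which will be absorbed harmlessly into the final constant).

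With this bound on the small ball in hand, I would then prove by induction on $K \ge 1$ that, writing $t := \alpha T^{\alpha\beta}$,
\[
|\cN_{v^*}(Kt)| \le q^{K-1} |\cN_{v^*}(t)|^{r(K-1) + 1}.
\]
The inductive step rests on the elementary covering relation $\cN_{v^*}((K+1)t) \subseteq \bigcup_{u \in \cN_{v^*}(Kt)} \cN_u(t)$, which holds because any vertex $w$ with $\dist(v^*, w) \le (K+1)t$ lies within distance $t$ of the vertex $u$ on a shortest $v^*$-to-$w$ path at distance $\min(Kt, \dist(v^*, w))$ from $v^*$; this is then combined with the uniform bound $|\cN_u(t)| \le q |\cN_{v^*}(t)|^r$ supplied by \eqref{eq:structural_constraint}.

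Finally, choosing $K := \lceil 1 + 2\beta/\alpha \rceil$ so that $Kt \ge (\alpha+2\beta) T^{\alpha\beta}$, and applying \eqref{eq:structural_constraint} one more time with $\widehat{v}(T^{\alpha\beta})$ in place of $v^*$, yields
\[
|\widehat{\cascade}(T^{\alpha\beta})| \le q |\cN_{v^*}(Kt)|^r \le q^{r(K-1)+1} M^{r^2(K-1) + r}.
\]
Substituting $M = \Theta(\log |U|)$ and absorbing the $q$-dependent prefactor into a polylogarithmic slack then gives $|\widehat{\cascade}(T^{\alpha\beta})| \le \log^c |U|$ for some $c = c(\alpha, \beta, q, r)$, as required. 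The only mildly delicate point worth flagging is the discrete-inverse boundary effect in passing from $T^{\alpha\beta} \le F_{v^*}^\alpha(M)$ to the bound on the small ball; the inductive covering step itself is routine once the structural condition is available.
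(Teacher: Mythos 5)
Your proof is correct and follows essentially the same logic as the paper's, with one genuinely different (and arguably cleaner) step. The paper first invokes Item \ref{item:thm_dist} and the triangle inequality to contain $\widehat{\cascade}(T^{\alpha\beta}) = \cN_{\widehat{v}(T^{\alpha\beta})}((\alpha+2\beta)T^{\alpha\beta})$ in a ball centered at $v^*$, and then bounds the size of that ball via Lemma \ref{lemma:neighborhood_power_bound} (the iterated-covering bound), the fact that $f_{v^*}^\alpha(t) \ge |\cN_{v^*}(\alpha t)|$, and finally $f_{v^*}^\alpha \circ F_{v^*}^\alpha = \mathrm{id}$. You instead skip the containment step entirely: you apply \eqref{eq:structural_constraint} directly with $u = \widehat{v}(T^{\alpha\beta})$, $v = v^*$, and $t = (\alpha+2\beta)T^{\alpha\beta}$ to convert $|\cN_{\widehat{v}}(\cdot)|$ into $q|\cN_{v^*}(\cdot)|^r$, so Item \ref{item:thm_dist} is never used. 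This is a small improvement: the paper's containment claim actually states the wrong radius (it should be $(2\alpha+3\beta)T^{\alpha\beta}$ rather than $(\alpha+\beta)T^{\alpha\beta}$ after applying the triangle inequality), and your route avoids that issue altogether, affecting only the final exponent $c$, which is harmless. Your inductive covering bound $|\cN_{v^*}(Kt)| \le q^{K-1}|\cN_{v^*}(t)|^{r(K-1)+1}$ is the same argument as (and in fact slightly tighter than) the paper's Lemma \ref{lemma:neighborhood_power_bound}, just inlined. The one step you flag as delicate, extracting $|\cN_{v^*}(\alpha T^{\alpha\beta})| \lesssim M$ from $T^{\alpha\beta} \le F_{v^*}^\alpha(M)$, is treated with the same level of informality in the paper (steps $(c)$ and $(d)$ use $|\cN_{v^*}(\alpha t)| \le f_{v^*}^\alpha(t)$ and $f_{v^*}^\alpha(F_{v^*}^\alpha(M)) = M$), so your caveat about discrete-inverse boundary effects is reasonable and consistent with the paper's own treatment.
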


At a high level, the condition \eqref{eq:structural_constraint} states that neighborhood sizes are \emph{polynomially equivalent}, in the sense that the size of one neighborhood can be bounded by a (fixed) polynomial of any other neighborhood. A consequence of this definition is that if one neighborhood grows polynomially in $t$ (which is often the case in spatial networks such as lattices), then \emph{all} neighborhoods must grow polynomially in $t$. Similarly, if one neighborhood grows exponentially in $t$ (which is often the case in tree-like networks), then \emph{all} neighborhoods must grow exponentially. We expect such a condition to be trivially fulfilled in models of natural networks.

Finally, we study the implications of Theorem \ref{thm:main_performance_analysis} in simple networks, for which neighborhood sizes can be explicitly calculated.
Notably, the bounds on $T^{\alpha \beta}$ in the following corollaries match the performance of the \emph{optimal} source estimation algorithms derived for deterministic cascades in \cite{sridhar2022qse}. 
The proofs of the corollaries below follow immediately from Theorem \ref{thm:main_performance_analysis}, the asymptotic characterization of $F_v^\alpha$ in Lemma \ref{lemma:Fv_asymptotics}, as well as the expressions for neighborhood sizes for lattices and regular trees found in \cite[Appendix A]{sridhar2022qse}.

\begin{corollary}
Assume the same conditions as in Theorem \ref{thm:main_performance_analysis}, and furthermore assume that $G$ is an infinite regular tree with degree at least 3. The following statements hold with probability tending to 1 as $|U| \to \infty$:
\begin{enumerate}
    \item $T^{\alpha \beta} \lesssim \frac{1}{\alpha} \log \log |U|$ 
\\
    \item $\dist(v^*, \widehat{v}(T^{\alpha \beta})) \lesssim \frac{\alpha + \beta}{\alpha} \log \log |U|$
\\
    \item $\cascade(T^{\alpha \beta} ) \subseteq \widehat{\cascade}(T^{\alpha \beta} )$ and $| \widehat{\cascade}(T^{\alpha \beta} )| \le ( \log |U| )^{ 3 \beta / \alpha}$.
\end{enumerate}
\end{corollary}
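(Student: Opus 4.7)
The plan is to apply Theorem~\ref{thm:main_performance_analysis} and replace the abstract quantity $F_{v^*}^\alpha$ with its explicit asymptotics on a regular tree. Since items 1 and 2 of the corollary simply simplify the bounds in the theorem, and item 3's containment assertion \emph{is} the theorem's conclusion, the only real work is to (a) compute the growth rate of $|\cN_v(t)|$ on a $d$-regular tree, (b) invert to obtain $F_{v^*}^\alpha$ via Lemma~\ref{lemma:Fv_asymptotics}, and (c) track the resulting polynomial exponent through to the size bound for $\widehat{\cascade}(T^{\alpha\beta})$.

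For the neighborhood calculation, I would use the standard formula $|\cN_v(t)| = 1 + d\frac{(d-1)^t - 1}{d-2} = \Theta((d-1)^t)$, valid uniformly in $v$ on a $d$-regular infinite tree with $d\ge 3$. Then $f_{v^*}^\alpha(t) = \sum_{s=0}^t |\cN_{v^*}(\alpha s)|$ is a geometric-type sum dominated by its last term, giving $f_{v^*}^\alpha(t) = \Theta((d-1)^{\alpha t})$; inverting via Lemma~\ref{lemma:Fv_asymptotics} yields $F_{v^*}^\alpha(x) \sim \frac{\log x}{\alpha \log(d-1)}$ as $x \to \infty$. Plugging $x = 15\log|U|/(p(1-2\epsilon)^2)$ into item~\ref{item:thm_T} of Theorem~\ref{thm:main_performance_analysis} yields $T^{\alpha\beta} \lesssim \frac{\log\log|U|}{\alpha \log(d-1)} \le \frac{1}{\alpha}\log\log|U|$, where the last step uses the base-2 log convention under which $\log(d-1) \ge 1$ for all $d \ge 3$. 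This establishes item 1; item 2 then follows immediately by using item~\ref{item:thm_dist} of the theorem, which provides $\dist(v^*, \widehat{v}(T^{\alpha\beta})) \le (\alpha+\beta) T^{\alpha\beta}$, and multiplying the item~1 bound by $(\alpha+\beta)$.

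For item 3, the containment $\cascade(T^{\alpha\beta})\subseteq \widehat{\cascade}(T^{\alpha\beta})$ is exactly item~\ref{item:thm_C} of Theorem~\ref{thm:main_performance_analysis}. For the size bound, I would unfold $\widehat{\cascade}(T^{\alpha\beta}) = \cN_{\widehat{v}(T^{\alpha\beta})}((\alpha+2\beta)T^{\alpha\beta})$ and apply the neighborhood formula once more to get $|\widehat{\cascade}(T^{\alpha\beta})| \lesssim (d-1)^{(\alpha+2\beta)T^{\alpha\beta}}$. Substituting the item~1 bound on $T^{\alpha\beta}$ turns this into $(\log|U|)^{(\alpha+2\beta)/\alpha}$, which is at most $(\log|U|)^{3\beta/\alpha}$ whenever $\beta/\alpha \ge 1$; by the definitions in~\eqref{eq:alpha_beta} we have $\beta/\alpha = 36\,\Delta \log\Delta \cdot \lambda_{max}/\lambda_{min} \gg 1$, so the comparison is valid. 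The main obstacle, insofar as there is one, is precisely this final constant-bookkeeping in the exponent of item~3 (along with the implicit log-base choice used to suppress $1/\log(d-1)$ in item~1); everything else is direct substitution into Theorem~\ref{thm:main_performance_analysis} and Lemma~\ref{lemma:Fv_asymptotics}, so no substantive difficulty is anticipated.
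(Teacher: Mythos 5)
Your approach is exactly the route the paper intends: the stated one-line ``proof'' is ``follows immediately from Theorem~\ref{thm:main_performance_analysis}, Lemma~\ref{lemma:Fv_asymptotics}, and the tree neighborhood formulas,'' which is precisely what you unfold. Your computations of $|\cN_v(t)| = \Theta((d-1)^t)$, $f_{v^*}^\alpha(t) = \Theta((d-1)^{\alpha t})$, and $F_{v^*}^\alpha(x) \sim \tfrac{\log x}{\alpha\log(d-1)}$ via Lemma~\ref{lemma:Fv_asymptotics} are all right, and your constant bookkeeping in item~3 is correct: $(\alpha+2\beta)/\alpha = 1 + 2\beta/\alpha \le 3\beta/\alpha$ since $\beta/\alpha = 36\,\Delta\log\Delta\cdot\lambda_{\max}/\lambda_{\min} \ge 1$.

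The one step to be wary of is your appeal to a ``base-2 log convention'' to turn $\tfrac{\log\log|U|}{\alpha\log(d-1)}$ into $\tfrac{1}{\alpha}\log\log|U|$. The paper definitely uses natural logarithms: in the proof of Lemma~\ref{lemma:E1k}, the substitution $\alpha = \lambda_{\min}/(12\log\Delta)$ is used to convert $e^{-\lambda_{\min}/6}\Delta^\alpha$ into $e^{-\lambda_{\min}/12}$, which only works if $\log=\ln$. Under natural log, $\log(d-1) = \ln 2 < 1$ when $d=3$, so your intermediate bound $\tfrac{\log\log|U|}{\alpha\log(d-1)}$ exceeds $\tfrac{1}{\alpha}\log\log|U|$ by a factor $1/\ln 2 \approx 1.44$, and the strict ``$\lesssim$'' (defined as $a_n \le (1+o(1))b_n$) does not hold for $d=3$. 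This is an issue with the corollary statement itself, not your derivation, so a cleaner way to present it is to state the honest bound $T^{\alpha\beta}\lesssim\tfrac{\log\log|U|}{\alpha\log(d-1)}$ and note that it matches the corollary up to the fixed constant $1/\log(d-1)$. It is also worth remarking that your item-3 argument is immune to this: the $\log(d-1)$ in the exponent of $(d-1)^{(\alpha+2\beta)T^{\alpha\beta}}$ cancels the $\log(d-1)$ in the denominator of $T^{\alpha\beta}$, so the exponent $(\alpha+2\beta)/\alpha$ comes out base-independent. Rather than substitute the already-simplified $\tfrac{1}{\alpha}\log\log|U|$ into the neighborhood formula, you should substitute the unsimplified $\tfrac{\log\log|U|}{\alpha\log(d-1)}$ so that this cancellation is visible and item~3 is established without relying on the questionable constant from item~1.
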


\begin{corollary}
Assume the same conditions as in Theorem \ref{thm:main_performance_analysis} and furthermore assume that $G$ is an infinite $\ell$-dimensional lattice. The following statements hold with probability tending to 1 as $|U| \to \infty$:
\begin{enumerate}

\item $T^{\alpha \beta} = O \left( \left( \log |U| \right )^{1/ (\ell + 1)} \right)$ \\

\item $\dist( v^*, \widehat{v}(T^{\alpha \beta} ) ) = O \left( \left( \log |U| \right)^{1/(\ell + 1)} \right)$ \\

\item $\cascade(T^{\alpha \beta} ) \subseteq \widehat{\cascade}(T^{\alpha \beta} )$ and $| \widehat{\cascade}(T^{\alpha \beta} ) | = O \left( \left( \log |U| \right)^{\frac{\ell}{\ell + 1}} \right)$.

\end{enumerate}
\end{corollary}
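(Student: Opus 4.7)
The plan is to specialize Theorem \ref{thm:main_performance_analysis} to the lattice geometry by computing the explicit asymptotic behavior of the function $F_{v^*}^\alpha$, then substituting into each of the three items. All three statements reduce to direct calculations once the lattice neighborhood size is known.

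First, I would invoke the standard estimate $|\cN_v(t)| = \Theta(t^\ell)$, uniformly in $v$, which holds in the vertex-transitive $\ell$-dimensional lattice (cf. \cite[Appendix A]{sridhar2022qse}). Summing this over $s$ from $0$ to $t$ gives
\[
f_{v^*}^\alpha(t) = \sum_{s=0}^t |\cN_{v^*}(\alpha s)| = \Theta\bigl( t^{\ell+1} \bigr),
\]
with the $\alpha$-dependence absorbed into the implicit constant. Inverting this polynomial growth, and invoking Lemma \ref{lemma:Fv_asymptotics} for the precise statement, yields $F_{v^*}^\alpha(x) = \Theta\bigl( x^{1/(\ell+1)} \bigr)$ as $x \to \infty$.

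Substituting $x = 15 \log|U| / (p(1-2\epsilon)^2)$ into Item \ref{item:thm_T} of Theorem \ref{thm:main_performance_analysis} immediately yields Item 1 of the corollary, since $p$ and $\epsilon$ are fixed constants. Item 2 then follows from Item \ref{item:thm_dist}, as $\dist(v^*, \widehat{v}(T^{\alpha\beta})) \le (\alpha+\beta) T^{\alpha\beta}$ and the constant $(\alpha+\beta)$ is absorbed into the big-$O$. For Item 3, the containment $\cascade(T^{\alpha\beta}) \subseteq \widehat{\cascade}(T^{\alpha\beta})$ is exactly Item \ref{item:thm_C}, and the size bound follows by unpacking the definition $\widehat{\cascade}(T^{\alpha\beta}) = \cN_{\widehat{v}(T^{\alpha\beta})}((\alpha+2\beta)T^{\alpha\beta})$ and applying the lattice neighborhood estimate once more:
\[
|\widehat{\cascade}(T^{\alpha\beta})| = \Theta\bigl( (T^{\alpha\beta})^\ell \bigr) = O\bigl( (\log|U|)^{\ell/(\ell+1)} \bigr).
\]

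The corollary is essentially a plug-in computation, so there is no substantive obstacle. The one detail worth verifying is that the implicit constants in $|\cN_v(t)| = \Theta(t^\ell)$ do not depend on the base vertex $v$; this follows from the vertex-transitivity of the lattice, ensuring that the derived asymptotic for $F_{v^*}^\alpha$ is valid regardless of where the (random) source sits.
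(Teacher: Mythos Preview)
Your proposal is correct and follows essentially the same route as the paper: apply the lattice neighborhood estimate $|\cN_v(t)| = \Theta(t^\ell)$ to compute $f_{v^*}^\alpha(t) = \Theta(t^{\ell+1})$ and hence $F_{v^*}^\alpha(x) = \Theta(x^{1/(\ell+1)})$, then plug directly into the three items of Theorem~\ref{thm:main_performance_analysis}. The paper states exactly this, citing Lemma~\ref{lemma:Fv_asymptotics} and the neighborhood formulas from \cite[Appendix A]{sridhar2022qse}; your treatment of Item~3 via the explicit lattice growth rate (rather than the more general Corollary~\ref{cor:cascade_estimator_size}) is also what yields the sharper exponent $\ell/(\ell+1)$.
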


\begin{figure}[t]
\centering
\begin{subfigure}{0.4 \textwidth}
\centering
\includegraphics[width= \textwidth]{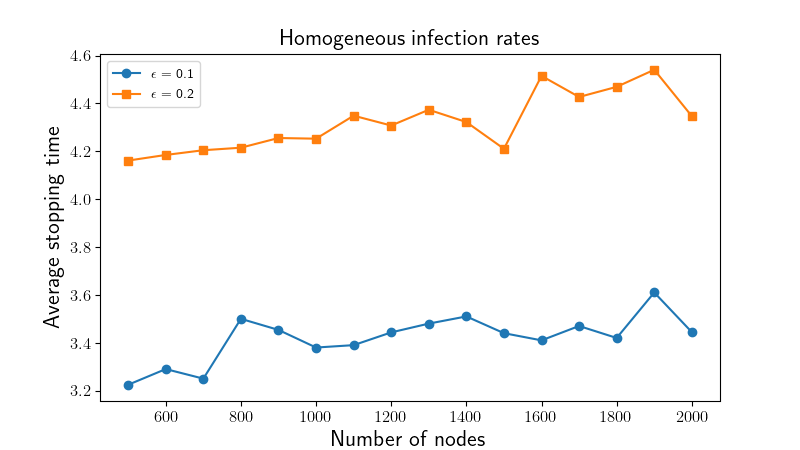}
\caption{}
\label{subfig:homogeneous_time}
\end{subfigure}\\
\begin{subfigure}{0.4 \textwidth}
\centering
\includegraphics[width= \textwidth]{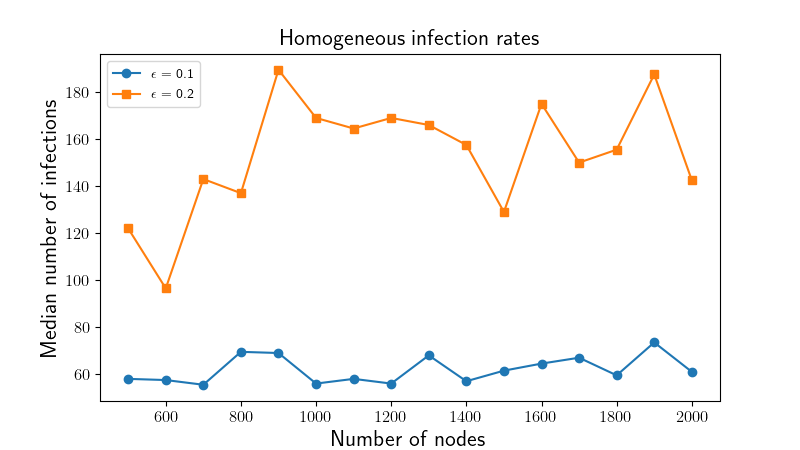}
\caption{}
\label{subfig:homogeneous_infections}
\end{subfigure}
\caption{Plots of the expected stopping time $(a)$ and the median number of infections upon stopping $(b)$ for homogeneous rates. 
}
\label{fig:homogeneous_rates}
\end{figure}

\begin{figure}[t]
\centering
\begin{subfigure}{0.4 \textwidth}
\centering
\includegraphics[width= \textwidth]{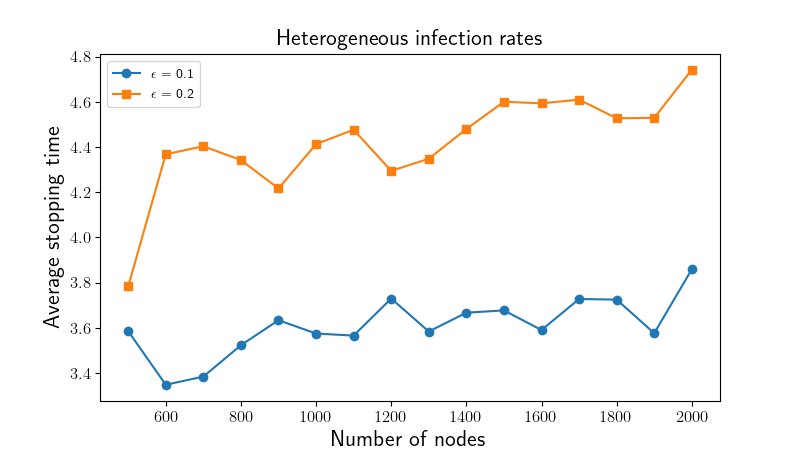}
\caption{}
\label{subfig:heterogeneous_time}
\end{subfigure}\\
\begin{subfigure}{0.4 \textwidth}
\centering
\includegraphics[width= \textwidth]{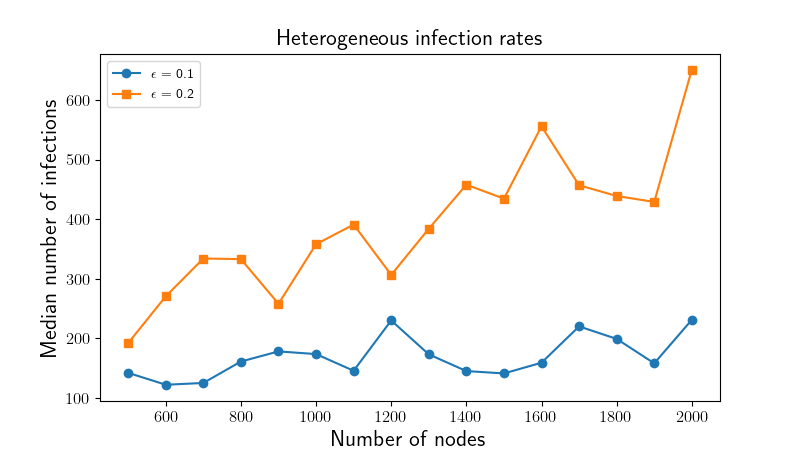}
\caption{}
\label{subfig:heterogeneous_infections}
\end{subfigure}
\caption{Plots of the expected stopping time $(a)$ and the median number of infections upon stopping $(b)$ for heterogeneous rates. 
}
\label{fig:heterogeneous_rates}
\end{figure}

\subsection{Simulations}

To complement our theoretical results, we show through simulations that our cascade estimators have desirable performance in non-asymptotic settings (i.e., small values of $|U|$) as well. 
The underlying network was chosen to be a uniform random 3-regular graph of size ranging between 500 and 2000 nodes.
The cascade source was chosen uniformly at random in each trial. Throughout, we set $\alpha = 1, \beta = 2, p = 0.5$ and studied $\epsilon \in \{0.1, 0.2\}$. 
To generate each data point in our plots, we ran 100 independent trials simulating the cascade propagation and our estimation procedure.\footnote{The curves may appear noisy even after averaging and taking the median due to the large amount of randomness in our simulations, coming from the network structure, cascade evolution, and diagnostic testing model.}

We first studied the performance of our estimator on a (classical) SI model with \emph{homogeneous rates}, where $\lambda_{uv} = 1$ whenever $u \sim v$. 
Our results can be found in Figure \ref{fig:homogeneous_rates}, with the average stopping time found in Figure \ref{subfig:homogeneous_time} and the \emph{median} number of infections in Figure \ref{subfig:homogeneous_infections}. 
In all our simulations, the estimation error was, on average, less than 1.5 for all parameter values.
We plot the median instead of the average to disregard rare instantiations of the cascade that spread extremely rapidly through the network.
Remarkably, when $\epsilon = 0.1$ our estimator is able to reliably track the cascade before 80 vertices are affected, even as the network grows large.
Though the number of infections is substantially larger for $\epsilon = 0.2$, the average stopping time in this case seems to flatten out (see Figure \ref{subfig:homogeneous_time}), indicating that the infection curve should also flatten as the network size increases beyond 2000.

We then studied a setting with \emph{heterogeneous rates}, in which each interaction rate was taken to be a uniform random variable in $[1,1.5]$.
We kept $(\alpha, \beta) = (1,2)$.
Here again, the estimation error was quite small on average, being less than 2 for all tested parameter values.
Interestingly, it does not seem that the heterogeneities in the rates significantly affect the performance of the estimator, as the average stopping times in Figures \ref{subfig:homogeneous_time} and \ref{subfig:heterogeneous_time} are similar.
However, the number of infections in Figure \ref{subfig:heterogeneous_infections} does increase due to the average increase in the spreading rate of the cascade.

\section{Conclusion}
\label{sec:conclusion}
In this work, we considered the problem of estimating a network cascade from a noisy time series of its spread. Prior work on source estimation in this setting only had provable performance guarantees for unrealistically simple cascades and network topologies \cite{sridhar_poor_bayes, sridhar_poor_sequential, sridhar2022qse}. Our work is substantially more general: we develop novel estimators for both the source and the full cascade, with provable guarantees for realistic cascades spreading on arbitrary networks of bounded degree.
Avenues for future work include the study of \emph{optimal} estimators for the cascade source and full spread in the general setting we consider, as well as a development of estimators in scenarios where nodes are adaptively selected, rather than randomly selected, for diagnostic testing.

\clearpage

\bibliographystyle{abbrv}
\bibliography{citations}

\appendices

\section{Proof of Proposition \ref{prop:Ek}}
\label{sec:cascade_behavior}

In this appendix, we characterize the typical behavior of the cascade, formalized through the event $\cE_k$. Formally, for a positive integer $k$, let us define the events 
\begin{align*}
\cE_{1,k} & : = \{ \forall t \ge k, \cN_{v^*}(\alpha t) \subseteq \cascade(t) \} \\
\cE_{2,k} & : = \{ \forall t \ge k, \cascade(t) \subseteq \cN_{v^*}(\beta t) \},
\end{align*}
where $(\alpha , \beta)$ are given by \eqref{eq:alpha_beta}. The main results of this section are the following two lemmas. 
The proofs utilize the alternative representation of the cascade (in terms of exponential edge weights), which we describe in Section \ref{sec:equivalent_cascade}.

\begin{lemma}
\label{lemma:E1k}
For any positive integer $k$, 
\[
\p ( \cE_{1,k}^c) \le \frac{12}{\lambda_{min}} e^{ - k \lambda_{min} / 12}.
\]
\end{lemma}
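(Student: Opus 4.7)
The plan is to exploit the FPP representation of the SI cascade from Section~\ref{sec:equivalent_cascade}: assign independent weights $W_e \sim \mathrm{Exp}(\lambda_e)$ to the edges and set $T_v := \min_{\pi:\, v^* \to v} \sum_{e \in \pi} W_e$, so that $\cascade(t) = \{v : T_v \le t\}$. Because $\{T_v > t\}$ is monotone decreasing in $t$ while membership in $\cN_{v^*}(\alpha t)$ is monotone increasing, the continuous-$t$ event $\{\exists t \ge k : v \in \cN_{v^*}(\alpha t) \setminus \cascade(t)\}$ collapses, for each fixed $v$, to the single inequality $\{T_v > \max(k, \dist(v^*, v)/\alpha)\}$. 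A union bound over vertices then yields
\[
\p(\cE_{1,k}^c) \le \sum_{v \ne v^*} \p\bigl(T_v > \max(k, \dist(v^*, v)/\alpha)\bigr).
\]

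To bound the individual terms, I would invoke stochastic dominance along a geodesic path: $T_v \le_{\mathrm{st}} S_d$ where $d := \dist(v^*, v)$, $S_d := \sum_{i=1}^d X_i$, and $X_i \sim \mathrm{Exp}(\lambda_{min})$ are iid (using $\mathrm{Exp}(\lambda_e) \le_{\mathrm{st}} \mathrm{Exp}(\lambda_{min})$, since $\lambda_e \ge \lambda_{min}$). Grouping by distance and using the degree bound $|\{v : \dist(v^*, v) = d\}| \le \Delta^d$ reduces the inequality to
\[
\p(\cE_{1,k}^c) \le \sum_{d \ge 1} \Delta^d \, \p\bigl(S_d > \max(k, d/\alpha)\bigr),
\]
and the Gamma tail $\p(S_d > t)$ can be controlled by a standard Chernoff bound (or equivalently by Poisson--Gamma duality, $\p(S_d > t) = \p(\mathrm{Poisson}(\lambda_{min} t) < d)$). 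The sum naturally splits at $d^\star = \alpha k$: for $d > \alpha k$ the threshold $d/\alpha$, combined with $\lambda_{min}/\alpha = 12 \log \Delta$, makes the Chernoff factor decay like $\Delta^{-cd}$, easily absorbing the $\Delta^d$ from counting vertices and leaving a negligible geometric tail; for $d \le \alpha k$ the near-source count $\sum_{d \le \alpha k} \Delta^d \approx \Delta^{\alpha k} = e^{k \lambda_{min}/12}$ combines with the $k$-dependent Chernoff decay to produce the claimed rate $e^{-k\lambda_{min}/12}$. This is precisely why the calibration $\alpha = \lambda_{min}/(12 \log \Delta)$ is chosen.

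The main obstacle will be realizing the precise constants. Hitting the exponent $1/12$ demands a carefully optimized Chernoff parameter so that the factor $\alpha \log \Delta$ cancels cleanly against the $\lambda_{min} t$ term; the prefactor $12/\lambda_{min}$ should emerge from a geometric series of the form $\sum_{n \ge k} e^{-n \lambda_{min}/12}$ (after discretizing $t$ to integer values and using the monotonicity of both $\cascade(\cdot)$ and $\cN_{v^*}(\alpha \cdot)$ to reduce the continuous-time event to a countable union), or equivalently from the tail integral $\int_k^\infty e^{-s \lambda_{min}/12}\,ds = (12/\lambda_{min}) e^{-k \lambda_{min}/12}$.
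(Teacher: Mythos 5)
Your plan is correct and uses the same core machinery as the paper -- the FPP representation, path-based stochastic dominance with $\mathrm{Exp}(\lambda_{min})$ weights, and a Chernoff bound on the resulting Gamma tail -- but the decomposition is genuinely different. The paper fixes a time $s$, bounds $\E\bigl[|\cN_{v^*}(\alpha s)\setminus\cascade(s)|\bigr]$ by multiplying the single-vertex Chernoff estimate by $|\cN_{v^*}(\alpha s)|\le 2\Delta^{\alpha s}$, applies Markov's inequality, and only then unions over $s\ge k$; you collapse the time quantifier first by monotonicity, reducing the failure event for a vertex at distance $d$ to $\{T_v>\max(k,d/\alpha)\}$, and you union over vertices grouped by distance with count $\Delta^d$. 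The calibration $\alpha=\lambda_{min}/(12\log\Delta)$ plays the same balancing role in both. Your route handles the real-valued $\forall t\ge k$ quantifier cleanly (the monotonicity reduction to a countable union is exactly what the paper's integer-$s$ union bound leaves implicit); the paper's route gives the prefactor directly from a geometric series in $s$ and a first-moment framing some may find more transparent. One caution: your closing remarks conflate the two decompositions -- the series $\sum_{n\ge k}e^{-n\lambda_{min}/12}$ and the integral $\int_k^\infty e^{-s\lambda_{min}/12}\,ds$ arise from the paper's time-indexed sum, not from your distance-indexed one. Carried through consistently (split at $d^\star=\alpha k$, Chernoff at $\theta=\lambda_{min}/2$, and the identity $\lambda_{min}/(2\alpha)=6\log\Delta$), your decomposition actually yields a bound of the form $C(\Delta)\,e^{-k\lambda_{min}/3}$, with a sharper exponent than $\lambda_{min}/12$ and a $\Delta$-dependent rather than $\lambda_{min}$-dependent prefactor -- stronger for the purposes of Proposition~\ref{prop:Ek}, though not the literal form stated, so do not try to force the $12/\lambda_{min}$ prefactor out of your sum.
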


\begin{lemma}
\label{lemma:E2k}
For any $k$ sufficiently large, it holds that 
\[
\p( \cE_{2,k}^c) \le 900 \left( \frac{2.9}{3} \right)^{3 \Delta \lambda_{max} k}.
\]
\end{lemma}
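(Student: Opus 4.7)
The plan is to exploit the first passage percolation (FPP) representation of the cascade (given in Section \ref{sec:equivalent_cascade}): $v \in \cascade(t)$ iff the weighted shortest-path distance $T(v^*, v)$ from $v^*$ to $v$, taken with respect to independent edge weights $W_{uv} \sim \mathrm{Exp}(\lambda_{uv})$, is at most $t$. Since each $\lambda_{uv} \le \lambda_{max}$, each $W_{uv}$ stochastically dominates an $\mathrm{Exp}(\lambda_{max})$ variable, so for the purpose of upper-bounding $\p(T(v^*, v) \le t)$ we may replace all edge weights by i.i.d.\ $\mathrm{Exp}(\lambda_{max})$ samples. By monotonicity of $\cascade(\cdot)$, $\cE_{2,k}^c$ occurs iff there is a vertex $v$ with $\dist(v^*, v) > \beta k$ and $T(v^*, v) < \dist(v^*, v)/\beta$, and in that case the FPP-realizing path $P^*$ from $v^*$ to $v$ is self-avoiding with length $|P^*| \ge \dist(v^*, v) > \beta k$ and total weight $W(P^*) = T(v^*, v) < \dist(v^*, v)/\beta \le |P^*|/\beta$.

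This reduces the task to a union bound over self-avoiding paths emanating from $v^*$. For any such path $P$ of length $m$, the weight $W(P) = \sum_{e \in P} W_e$ is $\mathrm{Gamma}(m, \lambda_{max})$-distributed, and the number of length-$m$ self-avoiding paths starting at $v^*$ is at most $\Delta(\Delta-1)^{m-1} \le \Delta^m$, giving
\begin{equation*}
\p(\cE_{2,k}^c) \le \sum_{m > \beta k} \Delta^m \, \p\!\left(\mathrm{Gamma}(m, \lambda_{max}) < m/\beta\right).
\end{equation*}
A Chernoff bound with the optimal tilt $\theta = \beta - \lambda_{max}$ (valid since $\beta = 3\Delta\lambda_{max} > \lambda_{max}$) yields
\begin{equation*}
\p\!\left(\mathrm{Gamma}(m, \lambda_{max}) < m/\beta\right) \le e^{m(1 - \lambda_{max}/\beta)} (\lambda_{max}/\beta)^m = \left(\tfrac{e}{3\Delta}\right)^m e^{-m/(3\Delta)} \le \left(\tfrac{e}{3\Delta}\right)^m.
\end{equation*}
Multiplying by $\Delta^m$ collapses the $\Delta$'s to leave $(e/3)^m$, and summing the geometric series over $m > \beta k$ gives $\p(\cE_{2,k}^c) \le C (e/3)^{\beta k}$ for an absolute constant $C = 1/(1 - e/3) \approx 10.65$. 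Since $e < 2.9$ implies $(e/3)^{\beta k} \le (2.9/3)^{\beta k}$, the stated prefactor $900$ easily absorbs $C$ (in fact with huge slack), yielding the bound for all $k \ge 1$.

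The main technical point is calibrating the tension between the combinatorial growth of paths from $v^*$ (of order $\Delta^m$) and the decay of the Gamma tail (of order $(e/(3\Delta))^m$). The choice $\beta = 3\Delta\lambda_{max}$ from \eqref{eq:alpha_beta} is essentially forced by this trade-off: only when $\beta$ is a sufficiently large multiple of $\Delta \lambda_{max}$ does the per-path Gamma tail decay faster than $\Delta^{-m}$ by a fixed geometric ratio, so that the union bound is summable. The remaining difficulty is the cosmetic one of translating the continuous-time event $\{\exists t \ge k, v : v \in \cascade(t), \dist(v^*, v) > \beta t\}$ into the purely discrete statement about path lengths; this is handled in the first paragraph by passing to the FPP path $P^*$, and sharper walk counts would improve the base $2.9/3$ but are not needed.
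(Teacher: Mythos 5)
Your proof is correct and reaches a stronger conclusion (constant $\approx 10.65$ and base $e/3$, versus the paper's $900$ and $2.9/3$), and the overall strategy — FPP representation, union bound over self-avoiding paths from $v^*$, geometric collapse of $\Delta^m$ against a Gamma lower-tail of order $(\Delta^{-1})^m$ — is the same as the paper's. Where you differ is in the bookkeeping. First, you characterize $\cE_{2,k}^c$ directly as the existence of a single self-avoiding path $P$ with $|P| > \beta k$ and $W(P) < |P|/\beta$, giving a \emph{single} sum over path lengths $m > \beta k$; the paper instead union-bounds over times $t \ge k$ and, for each $t$, over path lengths $d \ge \beta t$, yielding a double geometric sum (this is the source of the extra factor $30 \times 30 = 900$). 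Your decomposition is cleaner and also sidesteps the slight awkwardness of summing over a continuum of times. Second, for the per-path Gamma lower tail the paper establishes a separate Lemma \ref{lemma:weight_lower_bound} by bounding the Gamma CDF via its density at the endpoint plus Stirling's approximation, absorbing the resulting $\sqrt{m}$ factor by bumping $2.8 \to 2.9$; you use the standard Chernoff bound with the optimal tilt $\theta = \beta - \lambda_{\max}$, which avoids the $\sqrt{m}$ factor altogether and gives a slightly sharper geometric rate $e/3 < 2.8/3$. One cosmetic remark: your opening ``iff'' is really only the forward implication when the inner quantifier over $t$ is discrete (if $T(v^*,v)$ is just below an integer multiple of $1/\beta$, the converse can fail), but since only $\cE_{2,k}^c \subseteq \{\exists P : |P| > \beta k, W(P) < |P|/\beta\}$ is used, the bound is unaffected. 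The stochastic-dominance step (replacing heterogeneous rates by $\lambda_{\max}$) matches the first step of the paper's Lemma \ref{lemma:weight_lower_bound}.
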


The proof of Lemmas \ref{lemma:E1k} and \ref{lemma:E2k} can be found in Sections \ref{subsec:E1} and \ref{subsec:E2}, respectively. 
Moreover, the two lemmas readily imply Proposition \ref{prop:Ek}.

\begin{proof}[Proof of Proposition \ref{prop:Ek}]
Notice that $\cE_k = \cE_{1,k} \cap \cE_{2,k}$. Hence, for any $v \in V(G)$, $\p( \cE_k^c) \le \p ( \cE_{1,k}^c ) + \p ( \cE_{2,k}^c) \to 0$ as $k \to \infty$, as desired.
\end{proof}

\subsection{An equivalent cascade model}
\label{sec:equivalent_cascade}

We introduce a useful alternate representation of the cascade dynamics described in Section \ref{subsec:cascade}. We start with some notation. 
Generate a collection of independent random variables $\{X_{uv} \}_{u \sim v }$, where $X_{uv} \sim \mathrm{Exp}( \lambda_{uv})$ denotes the \emph{weight} of the edge $(u,v)$. Given a path\footnote{A path is a finite sequence of distinct vertices $u_1,\ldots, u_k$ such that $u_i \sim u_{i + 1}$ for all $1 \le i \le k - 1$.} $P$ in the graph, we let the weight of the path -- denoted by $\weight(P)$ -- be the sum of the edge weights along the path. Let $\cP_{vu}$ denote the set of all paths starting at $v$ and ending at $u$. Then, in this alternate cascade model, if $v$ is the source, then $u \in \cascade(t)$ if and only if $\inf_{P \in \cP_{vu}} \weight(P) \le t$. In other words, the quantity $\inf_{P \in \cP_{vu}} \weight(P)$ represents the \emph{infection time} of $u$ when $v$ is the cascade source. 
Due to the memoryless property of the exponential distribution, this representation is equivalent to the Markovian dynamics in \eqref{eq:cascade_dynamics}. 
We defer the reader to \cite[Chapter 6]{fpp} and references therein for details on this equivalence.

\subsection{The event $\cE_{1,k}$: Proof of Lemma \ref{lemma:E1k}}
\label{subsec:E1}

Let $u \in \cN_{v^*}(\alpha s)$ and let $d = \dist (u,v)$. Then we can find a path $w_0, \ldots, w_d$ with $w_0 = v^*$ and $w_d = u$.
Notice that the infection time of $u$ is upper bounded by $\sum_{i = 0}^{d - 1} X_{w_i w_{i + 1}}$, where we recall that the $X_{w_i w_{i + 1} }$'s are independent with $X_{w_i, w_{i + 1}} \sim \mathrm{Exp} ( \lambda_{w_i w_{i + 1}} )$. Hence, we have, for any $s \ge 1$ and any $\theta \in (0, \lambda_{min})$, 
\begin{align*}
\p & ( u \notin \cascade(s) )  \le \p_v \left( \sum_{i = 0}^{d - 1} X_{w_i w_{i + 1} } \ge s \right) \\
& \stackrel{(a)}{\le} e^{ - \theta s} \prod_{i = 0}^{d - 1} \frac{ \lambda_{w_i w_{i + 1} } }{ \lambda_{w_i w_{i + 1} } - \theta } \\
& = \exp \left \{ - \theta s - \sum_{i = 0}^{d - 1} \log \left( 1 - \frac{ \theta }{ \lambda_{w_i w_{i + 1} } } \right) \right \} \\
& \stackrel{(b)}{\le} \exp \left \{ - \theta s - \alpha s \log \left( 1 - \frac{\theta}{\lambda_{min} } \right ) \right \}.
\end{align*}
In $(a)$, we have used the form of the moment generating function of an exponential random variable, and in $(b)$ we have used that the function $x \mapsto \log (1 - x)$ is decreasing for $x \in [0,1)$ as well as $d \le \alpha s$. 
Setting $\theta = \lambda_{min} / 2$, it holds for $\alpha \le \lambda_{min} / ( 6\log 2)$, that
\[
\p ( u \notin \cascade(s) ) \le e^{- s \lambda_{min} / 3}.
\]
As a consequence, we also have that
\begin{multline*}
\E \left[ | \cN_{v^*}(\alpha s) \setminus \cascade(s) | \right]  = \sum_{u \in \cN_{v^*}(\alpha s) } \p( u \notin \cascade(s) ) \\
  \le e^{ - s \lambda_{min} / 6} | \cN_{v^*}(\alpha s) | 
 \le 2 \left( e^{ - \lambda_{min} / 6} \Delta^\alpha \right)^s.
\end{multline*}
Above, the final inequality uses the bound $|\cN_{v^*}(\alpha s)| \le 2 \Delta^{\alpha s}$ (see Lemma \ref{lemma:neighborhood_bound}). Markov's inequality now implies
\[
\p \left( | \cN_{v^*}(\alpha s) \setminus \cascade(s) | \ge 1 \right) \le 2 \left( e^{- \lambda_{min}/6} \Delta^\alpha \right)^s \le 2 e^{ - \frac{s \lambda_{min}}{12}},
\]
where the final expression follows from substituting $\alpha = \lambda_{min} / (12 \log \Delta)$. We conclude by taking a union bound over all $s \ge k$.
\qed

\subsection{The event $\cE_{2,k}$: Proof of Lemma \ref{lemma:E2k}}
\label{subsec:E2}

We start by proving a useful intermediate result on lower tail bounds of a sum of independent exponential random variables. 

\begin{lemma}
\label{lemma:weight_lower_bound}
Let $X_1, \ldots, X_m$ be independent random variables with $X_i \sim \mathrm{Exp}(\mu_i)$, and let $\mu : = \max_{1 \le i \le m} \mu_i$. Then for any $\epsilon \in (0, \mu^{-1} )$, it holds for $m$ sufficiently large that
\[
\p \left( \sum_{i = 1}^m X_i \le \epsilon m  \right) \le \sqrt{m} (2.8 \epsilon \mu )^m. 
\]
\end{lemma}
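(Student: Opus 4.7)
The plan is to establish the bound through a standard Chernoff-style argument applied to the lower tail, exploiting the fact that the heterogeneous rates $\mu_i$ are all bounded above by $\mu$. For each $i$ and any $\theta > 0$, the moment generating function of an exponential gives $\E[e^{-\theta X_i}] = \mu_i/(\mu_i + \theta)$. Since the map $x \mapsto x/(x+\theta)$ is increasing in $x$ on $(0, \infty)$, we have $\mu_i/(\mu_i + \theta) \le \mu/(\mu+\theta)$ uniformly in $i$. Markov's inequality applied to $e^{-\theta \sum_i X_i}$ therefore yields
\[
\p\!\left(\sum_{i=1}^m X_i \le \epsilon m\right) \le e^{\theta \epsilon m}\prod_{i=1}^m \frac{\mu_i}{\mu_i + \theta} \le \left(\frac{\mu\, e^{\theta \epsilon}}{\mu + \theta}\right)^{\!m}.
\]

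Next I would optimize over $\theta > 0$. Differentiating the exponent $\epsilon \theta - \log(1 + \theta/\mu)$ in $\theta$, the optimizer is $\theta^\star = 1/\epsilon - \mu$, which lies in $(0, \infty)$ precisely when $\epsilon < 1/\mu$, as assumed. Substituting $\theta^\star$ and simplifying using $\mu + \theta^\star = 1/\epsilon$ and $\epsilon \theta^\star = 1 - \epsilon \mu$, the right-hand side collapses to $(\epsilon \mu\, e^{1 - \epsilon \mu})^m$. Bounding $e^{1 - \epsilon \mu} \le e$ (since $\epsilon \mu > 0$) gives the cleaner bound $(\epsilon \mu e)^m$.

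To conclude, I would invoke the crude comparison $e < 2.8$. Since $e/2.8 < 1$, the factor $(e/2.8)^m$ is at most $1$, hence at most $\sqrt{m}$, for every $m \ge 1$, so $(\epsilon \mu e)^m \le \sqrt{m}\,(2.8\,\epsilon \mu)^m$, matching the stated inequality. I don't anticipate any significant obstacle: the heterogeneity of the $\mu_i$'s is absorbed cleanly by the monotonicity observation at the start, and the remaining calculus is routine. As a side remark, the "sufficiently large $m$" hypothesis appears stronger than needed — the above argument yields the bound for every $m \ge 1$ — but the extra $\sqrt{m}$ slack is presumably inserted so that tighter constants can be used in downstream applications (e.g., in the proof of Lemma \ref{lemma:E2k}).
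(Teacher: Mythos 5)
Your proof is correct, and it takes a genuinely different route from the paper. The paper's argument first replaces each $X_i$ by an $\mathrm{Exp}(\mu)$ variable via stochastic domination, so that $\sum_i X_i$ stochastically dominates $W \sim \mathrm{Gamma}(m,\mu)$; it then bounds $\p(W \le \epsilon m)$ by evaluating the Gamma density at its largest point on $[0,\epsilon m]$ (using that the density is increasing on $[0,(m-1)/\mu]$, which forces $m$ to be large enough that $\epsilon m \le (m-1)/\mu$) and applies Stirling's formula to the factorial. The $\sqrt{m}$ factor in the stated bound is not slack inserted for downstream convenience -- it arises naturally from Stirling. Your Chernoff-bound derivation handles the heterogeneous rates uniformly via the MGF monotonicity $\mu_i/(\mu_i+\theta) \le \mu/(\mu+\theta)$, arriving directly at the tighter bound $(\epsilon\mu\, e^{1-\epsilon\mu})^m$, which is exactly the geometric factor the paper obtains after Stirling but without the $\sqrt{m}$ prefactor or the restriction to large $m$. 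Your closing observation is therefore right: the Chernoff route proves the inequality for all $m \ge 1$, whereas the paper's density-plus-Stirling route genuinely needs $m$ large. Both approaches yield what matters for Lemma~\ref{lemma:E2k}; yours is cleaner and slightly stronger.
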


\begin{proof}
Notice that if $\mu_1 \le \mu_2$, where $X_1 \sim \mathrm{Exp}( \mu_1)$ and $X_2 \sim \mathrm{Exp} ( \mu_2)$, then $X_2 \preceq X_1$. Due to this stochastic ordering, $\sum_{i = 1}^m X_i$ stochastically dominates a sum of i.i.d. $\mathrm{Exp}(\mu)$ random variables, which is equal in distribution to $W \sim \mathrm{Gamma}(m, \mu)$. Hence
\begin{align*}
\p  \left( \sum_{i = 1}^m X_i \le \epsilon m \right)  & \le \p ( W \le \epsilon m )   = \int_0^{\epsilon m } \frac{ \mu^m x^{m-1} e^{- \mu x} }{ (m-1)! }dx.
\end{align*} 
Notice that the density of $W$ is increasing for $0 \le x \le (m - 1) / \mu$. As a result, for $\epsilon < 1/\mu$ and $m$ sufficiently large, we can bound the integral by 
\[
\frac{ \mu^{m} (\epsilon m)^{m} e^{ - \mu \epsilon m} }{( m - 1)!} \sim \frac{ \sqrt{m - 1} }{e \sqrt{2 \pi} } (  \epsilon \mu e^{1 - \mu \epsilon } )^m \le \sqrt{m} ( 2.8 \epsilon \mu )^m,
\]
where the asymptotic expansion is due to Stirling's formula. 
\end{proof}

We are now ready to prove our main result. 

\begin{proof}[Proof of Lemma \ref{lemma:E2k}]
Let $\cP_d$ be the set of paths of length $d$ starting from $v^8$. Notice that if, for all $P \in \bigcup_{d \ge \beta t} \cP_d$ we have that $\mathsf{weight}(P) > t$, then $\cascade(t) \setminus \cN_{v^*}(\beta t) = \emptyset$. Our proof therefore bounds the probability that a path $P \in \bigcup_{d \ge \beta t} \cP_d$ has weight at most $t$. To this end, we have that 
\begin{align}
\p &  ( \cascade(t) \setminus \cN_{v^*}(\beta t) \neq \emptyset ) \nonumber \\
& \le \p  \left( \exists P \in \bigcup_{d \ge \beta t} \cP_d : \mathsf{weight}(P) \le t \right) \nonumber \\
& \le \sum_{d \ge \beta t} | \cP_d | \max_{P \in \cP_d} \p ( \mathsf{weight}( P) \le t ) \nonumber \\
& \le \sum_{d \ge \beta t} | \cP_d | \sqrt{d}  \left( \frac{2.8 \lambda_{max} }{\beta} \right)^d \nonumber \\
\label{eq:weight_summation}
& \le \sum_{d \ge \beta t} | \cP_d | \sqrt{d} \left( \frac{2.8}{3 \Delta } \right)^d,
\end{align}
where the first inequality is due to a union bound, and
the third inequality  holds for $t$ sufficiently large and whenever $\beta > \lambda_{max}$, in light of Lemma \ref{lemma:weight_lower_bound}, and the final expression follows from substituting $\beta = 3 \Delta \lambda_{max}$. To simplify the final summation, we will bound the size of $\cP_d$. Since the maximum degree in $G$ is $\Delta$, simple counting arguments show that there are at most $\Delta^d$ paths of length $d$. For sufficiently large $t$, the summation in \eqref{eq:weight_summation} can be bounded by 
\begin{align*}
\sum_{d \ge \beta t} \sqrt{d} \left( \frac{2.8}{3 \Delta} \right)^d & \le \sum_{d \ge \beta t} \left( \frac{ 2.9  }{3 \Delta} \right)^d  = 30 \left(\frac{ 2.9 }{3} \right)^{\beta t}.
\end{align*}
In the display above, the inequality holds for $t$ sufficiently large, and the final expression is follows from the formula for the sum of a geometric series.

Putting everything together, we have for $k$ sufficiently large that 
\begin{align*}
\p ( \cE_{2,k}^c) & \le \p \left( \exists t \ge k : \cascade(t) \setminus \cN_{v^*}(\beta t) \neq \emptyset \right) \\
& \le \sum_{t \ge k} \p ( \cascade(t) \setminus \cN_{v^*}(\beta t) \neq \emptyset ) \\
& \le \sum_{t \ge k} 30 \left( \frac{2.9}{3} \right)^{\beta t} = 900 \left( \frac{2.9}{3} \right)^{\beta k}.
\end{align*}
\end{proof}

\section{Properties of the score functions}

Throughout this appendix, we will condition on the cascade evolution, denoted by $\boldsymbol{\cascade} : = \{ \cascade(s) \}_{s \ge 0}$. We will also assume that $\boldsymbol{\cascade}$ is a realization of the cascade where the event $\cE_k$ holds (see Proposition \ref{prop:Ek}), for an appropriately chosen value of $k$.
We will also define the measure $\p_v ( \cdot) = \p ( \cdot \vert v^* = v)$.

We begin by establishing a useful representation of the difference between score functions.
Let $\{A_i \}_{i \ge 1}, \{ B_i \}_{i \ge 1}$ be independent collections of i.i.d. $Q^+$-distributed random variables. 
For any $v \in V(G)$, let $g_v^\alpha(t) : = \sum_{s = 0}^t | \cN_v(\alpha s) \cap \cascade(s) |$.
For $u,v \in V$ and any $t \ge 0$, define the processes 
\begin{align}
\label{eq:M_definition}
M_{vu}^\alpha(t) & := \sum_{i = 1}^{f_v^\alpha(t) + f_u^\alpha(t) } A_i \\
\label{eq:D_definition}
D_{vu}^\alpha(t) & : = \sum_{i = 1}^{ f_v^\alpha(t) - g_v^\alpha(t) } (A_i + B_i). 
\end{align}

\begin{lemma}
\label{lemma:Zv_Zu_representation}
For $t$ satisfying $0 \le t \le \dist (v,u) / ( \alpha + \beta)$, 
\[
Z_v(t) - Z_u(t) \stackrel{d}{=} M_{vu}^\alpha(t) + D_{vu}^\alpha(t),
\]
with respect to the measure $\p_v ( \cdot \vert \boldsymbol{\cascade} )$.
\end{lemma}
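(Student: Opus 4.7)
The plan is to condition on the cascade trajectory $\boldsymbol{\cascade}$ (which, by hypothesis of this appendix, may be taken to lie in $\cE_k$ for large $k$) and exploit the conditional independence of the diagnostic measurements $\{Y_w(s)\}$ in order to rewrite $Z_v(t) - Z_u(t)$ as a difference of independent iid $Q^+$ sums that can then be matched against the definitions of $M_{vu}^\alpha$ and $D_{vu}^\alpha$.

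First, I would establish two geometric consequences of the constraint $t \le \dist(v,u)/(\alpha+\beta)$. For each $0 \le s \le t$, the neighborhoods $\cN_v(\alpha s)$ and $\cN_u(\alpha s)$ are disjoint, so the measurements summed in $Z_v(t)$ are independent of those summed in $Z_u(t)$. Moreover, on $\cE_k$ one has $\cascade(s) \subseteq \cN_v(\beta s)$ under $\p_v$, and the triangle inequality yields, for any $w \in \cN_u(\alpha s)$,
\[
\dist(v, w) \ge \dist(v, u) - \alpha s \ge (\alpha + \beta) t - \alpha t = \beta t \ge \beta s,
\]
so $\cN_u(\alpha s) \cap \cascade(s) = \emptyset$. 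Consequently every measurement contributing to $Z_u(t)$ is drawn from $Q^-$, while those contributing to $Z_v(t)$ split into $g_v^\alpha(t)$ samples from $Q^+$ (affected vertices) and $f_v^\alpha(t) - g_v^\alpha(t)$ samples from $Q^-$ (unaffected vertices).

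Second, I would invoke the elementary symmetry $-Y \stackrel{d}{=} Q^+$ whenever $Y \sim Q^-$ (both PMFs are supported on $\{-1, 0, +1\}$ and swap the $\pm 1$ masses under negation). Introducing independent iid $Q^+$ sequences to represent the negated $Q^-$ samples, and merging the two independent $Q^+$ groups that contribute to $Z_v(t)$ and $-Z_u(t)$ respectively, one arrives at the intermediate representation
\[
Z_v(t) - Z_u(t) \stackrel{d}{=} \sum_{i=1}^{g_v^\alpha(t) + f_u^\alpha(t)} \widetilde{A}_i \;-\; \sum_{i=1}^{f_v^\alpha(t) - g_v^\alpha(t)} B_i,
\]
with $\{\widetilde{A}_i\}$ and $\{B_i\}$ independent iid $Q^+$ sequences. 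To match this against $M_{vu}^\alpha(t) + D_{vu}^\alpha(t)$, I would choose the shared $\{A_i\}$ indexing so that the first $g_v^\alpha(t) + f_u^\alpha(t)$ terms of $M_{vu}^\alpha$ absorb the merged sum above, while the remaining $f_v^\alpha(t) - g_v^\alpha(t)$ terms of $M_{vu}^\alpha$ align with the $A_i$'s reused inside $D_{vu}^\alpha$, and the $B_i$'s then appear as the residual contribution from $D_{vu}^\alpha$.

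The main obstacle is this final bookkeeping step: threading the shared randomness in the $A_i$'s between $M_{vu}^\alpha$ and $D_{vu}^\alpha$ so that the combined object produces exactly the above difference of iid sums, and verifying that the indexing conventions are self-consistent. Once the coupling is laid out, everything else is a direct consequence of two facts established early in the argument: the conditional independence of the $\{Y_w(s)\}$'s given $\boldsymbol{\cascade}$, and the symmetry $-Q^- \stackrel{d}{=} Q^+$, both of which follow immediately from the model definitions.
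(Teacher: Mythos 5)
Your proposal is correct and mirrors the paper's proof: both condition on $\boldsymbol{\cascade}$ (on the event $\cE_k$), use the symmetry $-Q^- \stackrel{d}{=} Q^+$ to express $Z_u$ and the unaffected part of $Z_v$ as sums of $Q^+$-distributed variables, arrive at the same intermediate representation $\sum_{i=1}^{g_v^\alpha(t)+f_u^\alpha(t)} A_i - \sum_{i=1}^{f_v^\alpha(t)-g_v^\alpha(t)} B_i$, and finish with the re-indexing coupling you describe. The bookkeeping step you flag as the main obstacle is exactly what the paper executes (tersely) by inserting and removing an extra $\sum_{i=1}^{f_v^\alpha(t)-g_v^\alpha(t)} A_i$; your careful account of threading the shared $A_i$'s between $M_{vu}^\alpha$ and $D_{vu}^\alpha$ is sound and completes the argument.
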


\begin{proof}
We first consider the distributional representation of $Z_v(t)$ with respect to $\p_v ( \cdot \vert \boldsymbol{\cascade})$. From the definition of the signal model, we have for any $t \ge 0$ that 
\begin{align*}
Z_v(t) & = \sum_{s = 0}^t \left( \sum_{w \in  \cN_v(\alpha s) \cap \cascade(s) } Y_w(s) + \sum_{w \in \cN_v(\alpha s) \setminus \cascade(s) } Y_w(s) \right ) \\
& \stackrel{d}{=} \sum_{i = 1}^{g_{v}^\alpha(t) } A_i - \sum_{i = 1}^{f_v^\alpha( t) - g_{v}^\alpha(t) } B_i,
\end{align*}
where $\{ A_i \}_{i \ge 1}$ and $\{ B_i \}_{i \ge 1}$ are collections of i.i.d. samples from $Q^+$. On the other hand, notice that if the event $\cE_k$ holds, then $\cascade(t) \subseteq \cN_v(\beta t)$ for $t \ge k$, hence $\cN_u( \alpha t) \cap \cascade(t) = \emptyset$ for $u \in V_n \setminus \cN_v( (\alpha + \beta) t)$. 
As a consequence, 
\[
Z_u(t) \stackrel{d}{=} - \sum_{i = 1}^{f_u^\alpha(t) } A_i .
\]
Subtracting the two expressions, we see that for $t \ge k$, 
\[
Z_v(t) - Z_u(t) \stackrel{d}{=} \sum_{i = 1}^{g_v^\alpha(t) + f_u^\alpha(t) } A_i - \sum_{i = 1}^{f_v^\alpha(t) - g_v^\alpha(t) } B_i.
\]
Letting $\{ A_i' \}_{i \ge 1}$ be another sequence of i.i.d. $Q^+$-distributed random variables, we can add and subtract terms to obtain 
\begin{multline*}
Z_v(t) - Z_u(t) \stackrel{d}{=} \left( \sum_{i = 1}^{f_v^\alpha(t) - g_v^\alpha(t)} A_i + \sum_{i = 1}^{g_v^\alpha(t) + f_u^\alpha(t)} A_i \right) \\
- \sum_{i = 1}^{f_v^\alpha(t) - g_v^\alpha(t)} ( A_i' + B_i).
\end{multline*}
It is readily seen that this expression is equal to the one in the statement in Lemma \ref{lemma:Zv_Zu_representation}.
\end{proof}

We proceed by establishing some useful properties of $M_{vu}^\alpha(t)$ and $D_{vu}^\alpha(t)$. 

\begin{lemma}
\label{lemma:D_bound}
On the event $\cE_k$, it holds that $\sup_{t \ge 0} |D_{vu}^\alpha(t)| \le 2f_v^\alpha(k)$ almost surely.
\end{lemma}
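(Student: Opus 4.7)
The proof is short and proceeds by unpacking the definitions. Since $A_i$ and $B_i$ are $Q^+$-distributed, they take values in $\{-1, 0, +1\}$, so $|A_i + B_i| \le 2$ almost surely. Applying the triangle inequality to the definition \eqref{eq:D_definition}, we get the uniform bound
\[
|D_{vu}^\alpha(t)| \le 2 \bigl( f_v^\alpha(t) - g_v^\alpha(t) \bigr).
\]
The task therefore reduces to showing that $f_v^\alpha(t) - g_v^\alpha(t) \le f_v^\alpha(k)$ for all $t \ge 0$ on the event $\cE_k$.

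To this end, I would rewrite the difference using the definitions of $f_v^\alpha$ and $g_v^\alpha$ as
\[
f_v^\alpha(t) - g_v^\alpha(t) = \sum_{s=0}^{t} | \cN_v(\alpha s) \setminus \cascade(s) |.
\]
Next I would exploit the event $\cE_k$. Since we are working under $\p_v(\cdot \vert \boldsymbol{\cascade})$, we have $v^* = v$, and $\cE_k$ tells us that $\cN_v(\alpha s) \subseteq \cascade(s)$ for every $s \ge k$. Therefore each term with $s \ge k$ vanishes, and the sum collapses to
\[
\sum_{s=0}^{\min(t, k-1)} | \cN_v(\alpha s) \setminus \cascade(s) | \le \sum_{s=0}^{k-1} |\cN_v(\alpha s)| \le f_v^\alpha(k),
\]
where the last inequality just uses that $f_v^\alpha$ is non-decreasing. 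Combining this with the elementary bound $|D_{vu}^\alpha(t)| \le 2(f_v^\alpha(t) - g_v^\alpha(t))$ and taking the supremum over $t \ge 0$ yields the claimed inequality.

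There is no real obstacle here; the only subtlety worth stating explicitly is the bookkeeping that the event $\cE_k$ is phrased in terms of $v^*$ whereas $f_v^\alpha, g_v^\alpha, D_{vu}^\alpha$ are indexed by the candidate vertex $v$, and these coincide because the whole analysis conditions on the source being $v$. Once this is noted, the bound is essentially a one-line consequence of the fact that the summand $|\cN_v(\alpha s) \setminus \cascade(s)|$ is identically zero for $s \ge k$.
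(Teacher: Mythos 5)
Your proof is correct and follows essentially the same route as the paper's: bound $|D_{vu}^\alpha(t)|$ by $2\bigl(f_v^\alpha(t) - g_v^\alpha(t)\bigr)$ via the triangle inequality, then use $\cE_k$ to observe that the summands $|\cN_v(\alpha s)\setminus\cascade(s)|$ vanish for $s \ge k$, leaving a quantity bounded by $f_v^\alpha(k)$. Your remark about the bookkeeping between $v^*$ and $v$ is a helpful clarification but does not change the argument.
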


\begin{proof}
Conditionally on $\boldsymbol{\cascade}$, it follows from the definition of $D_{vu}^\alpha(t)$ that $|D_{vu}^\alpha(t) | \le 2(f_v^\alpha(t) - g_v^\alpha(t))$. On the event $\cE_k$, we have that $\cN_v(\alpha t) = \cN_v(\alpha t) \cap \cascade(t)$ for all $t \ge k$, so 
\[
f_v^\alpha(t) - g_v^\alpha(t) = f_v^\alpha(k) - g_v^\alpha(k) \le f_v^\alpha(k).
\]
The desired claim follows.
\end{proof}

\begin{lemma}
\label{lemma:M_concentration}
For $k$ sufficiently large, it holds for $t \ge 12 k / (p (1 - 2 \epsilon))$ that 
\begin{multline*}
\p_v \left( \left. Z_v(t) - Z_u(t) \le \frac{p (1 - 2 \epsilon) }{3} f_v^\alpha(t) \right \vert \cE_k \right)  \\
\le 2 \exp \left(  - \frac{p (1 - 2 \epsilon)^2}{10} f_v^\alpha(t) \right) . 
\end{multline*}
\end{lemma}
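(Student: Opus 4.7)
The plan is to invoke the distributional decomposition $Z_v(t) - Z_u(t) \stackrel{d}{=} M_{vu}^\alpha(t) + D_{vu}^\alpha(t)$ from Lemma \ref{lemma:Zv_Zu_representation}, absorb the correction term $D_{vu}^\alpha(t)$ into the target threshold via Lemma \ref{lemma:D_bound}, and apply a Bernstein-type concentration inequality to $M_{vu}^\alpha(t)$.

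First I would condition on a realization of $\boldsymbol{\cascade}$ satisfying $\cE_k$, so that Lemma \ref{lemma:D_bound} gives $|D_{vu}^\alpha(t)| \le 2 f_v^\alpha(k)$ almost surely. To absorb this bound into a fraction of the threshold $\frac{1}{3} p(1-2\epsilon) f_v^\alpha(t)$, I would compare $f_v^\alpha(k)$ to $f_v^\alpha(t)$: since each summand of $f_v^\alpha(t) - f_v^\alpha(k) = \sum_{s = k+1}^{t} |\cN_v(\alpha s)|$ is at least $|\cN_v(\alpha k)| \ge f_v^\alpha(k)/(k+1)$, one obtains
\[
\frac{f_v^\alpha(t)}{f_v^\alpha(k)} \ge 1 + \frac{t-k}{k+1}.
\]
For $t \ge 12 k /(p(1-2\epsilon))$ and $k$ sufficiently large, the right-hand side exceeds $12/(p(1-2\epsilon))$ up to a vanishing correction, yielding $2 f_v^\alpha(k) \le \tfrac{1}{6} p(1-2\epsilon) f_v^\alpha(t)$. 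Consequently, the event $\{Z_v(t) - Z_u(t) \le \tfrac{1}{3} p(1-2\epsilon) f_v^\alpha(t)\}$ is contained in $\{M_{vu}^\alpha(t) \le \tfrac{1}{2} p(1-2\epsilon) f_v^\alpha(t)\}$.

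Next, I would apply Bernstein's inequality (or, equivalently, a direct moment-generating-function calculation) to $M_{vu}^\alpha(t) = \sum_{i=1}^N A_i$, where $N := f_v^\alpha(t) + f_u^\alpha(t) \ge f_v^\alpha(t)$ and each $A_i \sim Q^+$ takes values in $\{-1, 0, +1\}$ with $\E[A_i] = p(1-2\epsilon)$ and $\mathrm{Var}(A_i) \le p$. Since $\E[M_{vu}^\alpha(t)] = N p(1-2\epsilon) \ge p(1-2\epsilon) f_v^\alpha(t)$, the required deviation is at least $\tfrac{1}{2} p(1-2\epsilon) f_v^\alpha(t) = p(1-2\epsilon)(N - f_v^\alpha(t)/2)$ at $N = f_v^\alpha(t)$. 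Bernstein then yields
\[
\p_v\!\left(M_{vu}^\alpha(t) \le \tfrac{1}{2} p(1-2\epsilon) f_v^\alpha(t)\right) \le \exp\!\left(- \frac{p(1-2\epsilon)^2 (N - f_v^\alpha(t)/2)^2}{2N + \tfrac{2}{3}(1-2\epsilon)(N - f_v^\alpha(t)/2)}\right),
\]
and the exponent (which is increasing in $N \ge f_v^\alpha(t)$) is minimized at $N = f_v^\alpha(t)$, giving an exponent of order $p(1-2\epsilon)^2 f_v^\alpha(t)$ with a constant at least $1/10$.

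The main obstacle is to carry the Chernoff/Bernstein calculation with a sharp enough constant. A plain Hoeffding bound (using only $A_i \in [-1,1]$) leaves $p^2(1-2\epsilon)^2$ in the exponent, losing a factor of $p$; recovering the correct order requires exploiting that $\mathrm{Var}(A_i)$ is $O(p)$ rather than $O(1)$, either through Bernstein's inequality or a direct MGF bound $\log \E[e^{-\theta A_i}] \le -\theta p(1-2\epsilon) + \tfrac{1}{2}\theta^2 p$ valid in the relevant range of $\theta$. Because the optimal Chernoff tilt $\theta^\ast$ is not vanishingly small when $N \asymp f_v^\alpha(t)$, the quadratic approximation of the MGF must be verified on this range; the factor of $2$ on the right-hand side of the lemma provides slack for any looseness in these approximations and in the bound $2 f_v^\alpha(k) \le \tfrac{1}{6} p(1-2\epsilon) f_v^\alpha(t)$.
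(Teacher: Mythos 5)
Your proposal follows the same path as the paper: conditioning on a realization of $\boldsymbol{\cascade}$ in $\cE_k$, invoking Lemmas~\ref{lemma:Zv_Zu_representation} and~\ref{lemma:D_bound} to reduce to $M_{vu}^\alpha(t)$, comparing $f_v^\alpha(k)$ to $f_v^\alpha(t)$ (your bound $f_v^\alpha(t)/f_v^\alpha(k) \ge (t+1)/(k+1)$ is the same inequality the paper packages as Lemma~\ref{lemma:f_comparison}), and closing with a Bernstein bound that exploits $\mathrm{Var}(A_i) \le p$. The structure, the decomposition, and the key tools all match.

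The one genuine omission is the final step that converts a bound on $\p_v(\cdot \mid \boldsymbol{\cascade})$, valid for each fixed realization in $\cE_k$, into the bound on $\p_v(\cdot \mid \cE_k)$ that the lemma actually asserts. The paper does this via
\[
\p_v(\cA \mid \cE_k) \;=\; \frac{\E_v\bigl[\p_v(\cA \mid \boldsymbol{\cascade})\,\mathbf{1}(\cE_k)\bigr]}{\p(\cE_k)},
\]
bounds the numerator by the uniform-in-realization estimate, and uses $\p(\cE_k) \ge 1/2$ for $k$ large; this is precisely where the factor of $2$ on the right-hand side of the lemma comes from. You instead attribute that factor to ``slack for looseness in the approximations,'' which misses its actual role; you also never explicitly re-integrate over $\boldsymbol{\cascade}$ within $\cE_k$. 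This is a short and standard step, so the gap is not conceptual, but it should be stated rather than implicitly absorbed. The remaining loose end you already flag --- whether $2 f_v^\alpha(k) \le \tfrac{1}{6}p(1-2\epsilon)f_v^\alpha(t)$ holds exactly or only up to a $1+O(1/k)$ factor at the boundary $t = 12k/(p(1-2\epsilon))$ --- is harmless, since the Bernstein step has constant slack to spare (and the paper's own Lemma~\ref{lemma:f_comparison} trades the $(t+1)/(k+1)$ bound for the weaker but cleaner $t/(2k)$).
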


\begin{proof}
Condition on $\boldsymbol{\cascade}$ and suppose that the event $\cE_k$ holds.
As a shorthand, let $N : = f_v^\alpha(t) + f_u^\alpha(t)$.
Under $\p_v$, $M_{vu}^\alpha(t)$ is a sum of $N$ i.i.d. $Q^+$-distributed random variables. For $A \sim Q^+$, we have that $\E [ A ] = p(1 - 2 \epsilon)$, which in turn implies $\E_v [ M_{vu}^\alpha(t) \vert \boldsymbol{\cascade} ] = p ( 1 - 2\epsilon) N$.
It also holds that $\mathrm{Var}(A) \le \E [ A^2] = p$. Bernstein's inequality therefore implies that
\begin{align}
\label{eq:M_concentration}
\p_v \left( \left. M_{vu}^\alpha(t) \le \frac{p ( 1 - 2 \epsilon)}{2} N \right \vert \boldsymbol{\cascade} \right) & \le \exp \left( - \frac{ p (1 - 2 \epsilon)^2}{10} N \right).
\end{align}
We now use \eqref{eq:M_concentration} to derive an inequality for $Z_v(t) - Z_u(t)$. We can write
\begin{align*}
\p_v & \left( \left. Z_v(t) - Z_u(t) \le \frac{p (1 - 2 \epsilon)}{3} f_v^\alpha(t) \right \vert \boldsymbol{\cascade} \right) \\
& \stackrel{(a)}{\le} \p_v \left( \left. M_{vu}^\alpha(t) \le \frac{p(1 - 2 \epsilon)}{3} f_v^\alpha(t) + 2 f_v^\alpha(k) \right \vert \boldsymbol{\cascade} \right) \\
& \stackrel{(b)}{\le} \p_v \left(\left.  M_{vu}^\alpha(t) \le \frac{p(1 - 2 \epsilon)}{2} N \right \vert \boldsymbol{\cascade} \right) \\
& \stackrel{(c)}{\le} \exp \left ( - \frac{p(1 - 2 \epsilon)^2}{10} f_v^\alpha(t) \right ).
\end{align*}
Above, $(a)$ is due to the decomposition $Z_v(t) - Z_u(t) = M_{vu}^\alpha(t) + D_{vu}^\alpha(t)$ and since $|D_{vu}^\alpha(t)| \le 2 f_v^\alpha(k)$ on the event $\cE_k$ by Lemma \ref{lemma:D_bound}; 
$(b)$ follows since $N \ge f_v^\alpha(t)$ assumes $t$ is sufficiently large so that $f_v^\alpha(k) \le p(1 - 2 \epsilon) f_v^\alpha(t) / 6$, which holds when $t \ge 12k / ( p(1 - 2 \epsilon))$ in light of Lemma \ref{lemma:f_comparison}; 
and $(c)$ uses \eqref{eq:M_concentration} and $N \ge f_v^\alpha(t)$.

To relate the probability bound to the probability of interest, we can write, for any event $\cA$, 
\[
\p_v ( \cA \vert \cE_k) = \frac{\p_v ( \cA \cap \cE_k) }{\p ( \cE_k)} = \frac{ \E_v [ \p_v ( \cA \vert \boldsymbol{\cascade} ) \mathbf{1}( \cE_k)] }{ \p(\cE_k)}.
\]
The desired result follows from letting $\cA : = \{ Z_v(t) - Z_u(t) \le p ( 1 - 2 \epsilon ) f_v^\alpha(t) / 3 \}$ and from letting $k$ be sufficiently large so that $\p( \cE_k) \ge 1/2$.
\end{proof}

\begin{lemma}
\label{lemma:M_martingale}
Fix $u,v \in V(G)$, and let $T$ be a stopping time satisfying $T \le \dist(u,v) / (\alpha + \beta)$ almost surely. Then for any sufficiently large integer $k$ and any $x \ge 0$, it holds that
\[
\p_v \left( \left. Z_v(T) - Z_u(T) \le - x \right \vert \cE_k \right) \le 2\left( \frac{\epsilon}{1 - \epsilon} \right)^{x - 2f_v^\alpha(k)}.
\]
\end{lemma}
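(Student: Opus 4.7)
The strategy is to reduce the conditional probability to a lower-tail estimate for the random walk $S_m := \sum_{i=1}^m A_i$ underlying $M_{vu}^\alpha$, and then control that tail with Doob's maximal inequality applied to an exponential martingale. The natural candidate base is $s := \epsilon/(1-\epsilon) \in (0,1)$: a brief check using the PMF of $Q^+$ gives $\E[s^{A_1}] = p(1-\epsilon)\, s + (1-p) + p\epsilon\, s^{-1} = 1$, so $W_m := s^{S_m}$ is a nonnegative martingale with $W_0 = 1$.

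I would proceed in three steps. First, condition on a cascade realization $\boldsymbol{\cascade}$ lying in $\cE_k$ and set $T_0 := \dist(u,v)/(\alpha+\beta)$, so that $T \le T_0$ almost surely. Lemma \ref{lemma:Zv_Zu_representation} yields the distributional identity $Z_v(T) - Z_u(T) \stackrel{d}{=} M_{vu}^\alpha(T) + D_{vu}^\alpha(T)$ under $\p_v(\cdot \mid \boldsymbol{\cascade})$, and Lemma \ref{lemma:D_bound} gives $D_{vu}^\alpha(T) \ge -2 f_v^\alpha(k)$ on $\cE_k$. Consequently, $\{Z_v(T) - Z_u(T) \le -x\}$ is contained in $\{M_{vu}^\alpha(T) \le -(x - 2 f_v^\alpha(k))\}$. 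Second, write $M_{vu}^\alpha(T) = S_{N(T)}$ where $N(T) := f_v^\alpha(T) + f_u^\alpha(T)$ is bounded above by $N_{\max} := f_v^\alpha(T_0) + f_u^\alpha(T_0)$, a quantity deterministic given $\boldsymbol{\cascade}$; therefore the previous event is contained in $\{\min_{0 \le m \le N_{\max}} S_m \le -(x - 2 f_v^\alpha(k))\}$. Third, since $s^a$ is decreasing in $a$, Doob's maximal inequality applied to $W_m$ on the deterministic window $m \le N_{\max}$ yields
\[
\p\bigl( \min_{m \le N_{\max}} S_m \le -y \bigr) = \p\bigl( \max_{m \le N_{\max}} W_m \ge s^{-y} \bigr) \le s^y
\]
for any $y \ge 0$. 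Substituting $y = x - 2 f_v^\alpha(k)$ (the case $x < 2 f_v^\alpha(k)$ being trivial, as the right-hand side of the lemma already exceeds $2$ in that regime) and averaging over cascade realizations in $\cE_k$ using $\p(\cE_k) \ge 1/2$ for $k$ sufficiently large, in the same manner as the final step of the proof of Lemma \ref{lemma:M_concentration}, produces the claimed bound.

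The main obstacle is that $T$ is a stopping time with respect to the observation filtration rather than the filtration generated by $\{A_i\}$, so optional stopping does not apply directly to $W_m$. The key to sidestepping this is the deterministic upper bound $N(T) \le N_{\max}$ on the indexing clock of $M_{vu}^\alpha$: bounding $S_{N(T)}$ by the running minimum of $S_m$ over the fixed window $m \le N_{\max}$ replaces the random stopping index with a deterministic one, reducing the task to a fixed-time Doob's inequality for which no subtle measurability considerations arise.
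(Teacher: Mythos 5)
Your proof is correct and reaches the same bound as the paper, but via a slightly different application of martingale machinery, and you flag a subtlety the paper glosses over. Both arguments start from the same decomposition (Lemmas \ref{lemma:Zv_Zu_representation} and \ref{lemma:D_bound}) and the same exponential martingale with base $\epsilon/(1-\epsilon)$, which you verify has unit mean increments under $Q^+$. The paper then applies Markov's inequality to $(\epsilon/(1-\epsilon))^{M_{vu}^\alpha(T)}$ and invokes the Optional Stopping Theorem, implicitly treating $T$ as a bounded stopping time for the filtration to which $t \mapsto M_{vu}^\alpha(t)$ is adapted. As you observe, this requires a coupling of the abstract $\{A_i\}$ to the actual observations (plus auxiliary randomization) so that the martingale is adapted and $T$ remains a stopping time for the enlarged filtration; the paper does not spell this out. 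Your route instead uses that $N(T) = f_v^\alpha(T) + f_u^\alpha(T) \le N_{\max}$ is deterministic given $\boldsymbol{\cascade}$, bounds $M_{vu}^\alpha(T)$ from below by the running minimum of $S_m$ over the fixed window $m \le N_{\max}$, and applies Doob's maximal inequality for the nonnegative martingale $W_m$. This replaces the random stopping index with a deterministic horizon and so requires no adaptedness argument for $T$, at no cost in the final bound. The conclusion $\p(\max_{m \le N_{\max}} W_m \ge s^{-y}) \le \E[W_{N_{\max}}]/s^{-y} = s^y$ is exactly what the paper gets from optional stopping, and your handling of the trivial case $x < 2f_v^\alpha(k)$ and the final averaging over $\cE_k$ (yielding the factor $2$ once $\p(\cE_k) \ge 1/2$) match the paper's treatment. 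The approaches are interchangeable, but yours is the more self-contained of the two.
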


\begin{proof}
Suppose that $\cE_k$ holds. Conditionally on $\boldsymbol{\cascade}$, we claim that the process $(\epsilon / (1 - \epsilon) )^{M_{vu}^\alpha(t)}$
is a martingale with respect to $\p_v ( \cdot \vert \boldsymbol{\cascade})$. To see why, notice that for $A \sim Q^+$, 
we have that 
\[
\E \left[ \left( \frac{\epsilon}{1 - \epsilon} \right)^A \right] = 1.
\]
Since $M_{vu}^\alpha(t)$ is an i.i.d. sum of $f_v^\alpha(t) + f_u^\alpha(t)$ random variables distributed according to $Q^+$, the claim follows.

We now turn to the proof of the main result. Let $T$ be any stopping time that is almost surely bounded. Then, conditioned on $\boldsymbol{\cascade}$, we have that
\begin{align*}
\p_v & \left( Z_v(T) - Z_u(T) \le - x \vert \boldsymbol{\cascade} \right) \\
& \stackrel{(a)}{\le} \p_v ( M_{vu}^\alpha(T) \le - x + 2 f_v^\alpha(k) \vert \boldsymbol{\cascade} ) \\
& = \p_v \left( \left. \left( \frac{\epsilon}{1 - \epsilon} \right)^{M_{vu}^\alpha(T)} \ge \left( \frac{1 - \epsilon}{\epsilon} \right)^{x - 2f_v^\alpha(k) } \right \vert \boldsymbol{\cascade} \right) \\
& \stackrel{(b)}{\le} \left( \frac{\epsilon}{1 - \epsilon} \right)^{x- 2f_v^\alpha(k)} \E_v \left[ \left. \left( \frac{\epsilon}{1 - \epsilon} \right)^{M_{vu}^\alpha(T)} \right \vert \boldsymbol{\cascade} \right ] \\
& \stackrel{(c)}{=} \left( \frac{\epsilon}{1  - \epsilon} \right)^{x - 2f_v^\alpha(k) }.
\end{align*}
Above, $(a)$ follows from the representation of $Z_v(T) - Z_u(T)$ in Lemma \ref{lemma:Zv_Zu_representation} (which holds since $T \le \dist(u,v) / (\alpha + \beta)$ almost surely) as well as the bound for $D_{vu}^\alpha(T)$ established in Lemma \ref{lemma:D_bound};
$(b)$ is due to Markov's inequality; 
and $(c)$ is a consequence of the Optional Stopping Theorem \cite[Section 10.10]{williams_1991}, since $T$ is almost surely bounded. 

Finally, to replace the conditioning on the cascade with conditioning on $\cE_k$, we may follow the same reasoning as the proof of Lemma \ref{lemma:M_concentration}.
\end{proof}

\section{Proofs of Theorem \ref{thm:main_performance_analysis} and Corollary \ref{cor:cascade_estimator_size}}
\label{sec:proof_of_thm}

We prove a series of lemmas which establish properties of $T^{\alpha \beta}, \dist(v^*, \widehat{v}(T^{\alpha \beta}) )$ and $\widehat{\cascade}(T^{\alpha \beta})$. The proof of Theorem \ref{thm:main_performance_analysis}, which follows readily from these results, can be found at the end of this section.

\begin{lemma}
\label{lemma:T_tail_bound}
Let $k$ be sufficiently large and let $v \in U$. Then for $|U|$ sufficiently large,
\[
\p\left( \left. T^{\alpha \beta} \ge F_{v^*}^\alpha \left( \frac{15 \log |U| }{p (1 - 2 \epsilon)^2} \right)  \right \vert \cE_k \right) \le 2 |U|^{-1/2}.
\]
\end{lemma}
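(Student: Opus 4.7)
The plan is to reduce bounding $T^{\alpha\beta}$ to controlling $T^{\alpha\beta}(v^*)$, since $T^{\alpha\beta} = \min_{v \in U} T^{\alpha\beta}(v) \le T^{\alpha\beta}(v^*)$. Set $t^* := F_{v^*}^\alpha\!\left( 15 \log |U| / (p(1-2\epsilon)^2) \right)$, so (essentially, up to the integer rounding implicit in $F_{v^*}^\alpha$) $f_{v^*}^\alpha(t^*) \approx 15 \log |U| / (p(1-2\epsilon)^2)$. It then suffices to show that the stopping condition \eqref{eq:Tv_stopping_time} for $v = v^*$ is satisfied at time $t^*$ with probability at least $1 - 2|U|^{-1/2}$, conditional on $\cE_k$.

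To do this, I would fix an arbitrary $u \in U$ with $\dist(u, v^*) \ge (\alpha + \beta) t^*$ and apply Lemma \ref{lemma:M_concentration}. Provided $|U|$ is large enough that $t^* \ge 12 k / (p(1-2\epsilon))$ (which holds because $t^* \to \infty$ as $|U| \to \infty$ via $F_{v^*}^\alpha$), the lemma yields
\[
\p_{v^*}\!\left( Z_{v^*}(t^*) - Z_u(t^*) \le \tfrac{p(1-2\epsilon)}{3} f_{v^*}^\alpha(t^*) \,\middle|\, \cE_k \right) \le 2 \exp\!\left( -\tfrac{p(1-2\epsilon)^2}{10} f_{v^*}^\alpha(t^*) \right).
\]
Substituting the value of $f_{v^*}^\alpha(t^*)$, the right-hand side becomes $2 |U|^{-3/2}$, and the quantity $p(1-2\epsilon) f_{v^*}^\alpha(t^*)/3 = 5 \log |U| / (1-2\epsilon)$ exceeds the threshold $2 \log |U| / \log((1-\epsilon)/\epsilon)$ via the elementary inequality $\log((1-\epsilon)/\epsilon) \ge 2(1-2\epsilon)$ for $\epsilon \in (0, 1/2)$. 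A union bound over the at most $|U|$ vertices $u$ appearing in \eqref{eq:Tv_stopping_time} then yields the claim: with conditional probability at least $1 - |U| \cdot 2|U|^{-3/2} = 1 - 2|U|^{-1/2}$, the stopping condition for $v^*$ is met by time $t^*$, so $T^{\alpha\beta} \le t^*$.

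The main subtleties are bookkeeping rather than conceptual. First, I need to confirm that the ``sufficiently large $t$'' hypothesis of Lemma \ref{lemma:M_concentration} is satisfied uniformly in the $u$ over which the union bound is taken; this is immediate because the hypothesis only depends on $v^*$ and $t^*$. Second, the integer rounding in the definition of $f_{v^*}^\alpha$ and its inverse $F_{v^*}^\alpha$ means we should formally take $t^*$ to be the smallest integer at which $f_{v^*}^\alpha(t^*) \ge 15 \log |U| / (p(1-2\epsilon)^2)$; the bound $p(1-2\epsilon)f_{v^*}^\alpha(t^*)/3 \ge 5\log|U|/(1-2\epsilon)$ still holds, and the exponent in the concentration bound is only improved. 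These are the only obstacles; the core of the argument is the clean matching between the Bernstein-type exponent $p(1-2\epsilon)^2 / 10$ and the drift $p(1-2\epsilon)/3$ delivered by Lemma \ref{lemma:M_concentration}, combined with the choice of constant $15$ in the statement, which is calibrated precisely so that the union bound over $|U|$ bad vertices still leaves a polynomial margin.
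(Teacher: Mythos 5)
Your proposal is correct and follows essentially the same route as the paper: reduce to the stopping time $T^{\alpha\beta}(v^*)$, set $t^* = F_{v^*}^\alpha\bigl(15\log|U|/(p(1-2\epsilon)^2)\bigr)$, apply Lemma \ref{lemma:M_concentration} to each far vertex $u$, verify the drift $\tfrac{p(1-2\epsilon)}{3}f_{v^*}^\alpha(t^*)$ dominates the threshold $\tau$, and union bound over at most $|U|$ vertices to get $2|U|^{-1/2}$. Your justification $\log((1-\epsilon)/\epsilon)\ge 2(1-2\epsilon)$ is a slightly sharper version of the comparison the paper uses, and the integer-rounding caveat you flag is exactly the right bookkeeping concern (and is glossed over similarly in the paper).
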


\begin{proof}
Define the quantity 
\begin{equation}
\label{eq:tn}
t_v : = F_v^\alpha \left( \frac{15 \log |U| }{p (1 - 2 \epsilon)^2} \right),
\end{equation}
and notice that
\begin{equation}
\label{eq:fv_tau_inequality}
\frac{p(1 - 2 \epsilon)}{3} f_v^\alpha(t_v) \ge \frac{ 5 \log |U| }{1 - 2 \epsilon} \ge \frac{ 5 \log |U| }{\log \left( \frac{1 - \epsilon}{\epsilon} \right)} \ge \tau.
\end{equation}
Moreover, we can bound the probability of interest as
\begin{align}
\p_v & ( T^{\alpha \beta} \ge t_v \vert \cE_k )  \le \p_v ( T^{\alpha \beta}(v) \ge t_v  \vert \cE_k ) \nonumber \\
& \stackrel{(a)}{\le} \sum_{u \in U \setminus \cN_v( ( \alpha + \beta) t_v )} \hspace{-0.7cm} \p_v ( Z_v(t_v ) - Z_u(t_v ) < \tau \vert \cE_k ) \nonumber  \\
& \stackrel{(b)}{\le} 2 \exp \left ( \log |U| - \frac{p (1 - 2 \epsilon)^2}{10} f_v^\alpha(t_v) \right) \nonumber \\
& \stackrel{(c)}{\le} 2 \exp \left ( - \frac{1}{2} \log |U| \right ) . \nonumber 
\end{align}
Above,
$(a)$ is due to a union bound,
$(b)$ is a consequence of Lemma \ref{lemma:M_concentration} since \eqref{eq:fv_tau_inequality} holds, and
$(c)$ uses the definition of $t_v$.
\end{proof}

\begin{lemma}
\label{lemma:estimation_tail_bound}
Let $k$ be sufficiently large. Then for any $v \in U$, it holds that
\[
\p_v \left( \left. \dist(v, \widehat{v}^{\alpha \beta} ) > (\alpha + \beta) T^{\alpha \beta} \right \vert \cE_k \right) \le c_{k,\epsilon} |U|^{-1},
\]
where $c_{k,\epsilon}$ is a constant depending only on $k$ and $\epsilon$.
\end{lemma}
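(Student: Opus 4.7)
The plan is to show that the event $\{\dist(v, \widehat{v}(T^{\alpha\beta})) > (\alpha+\beta) T^{\alpha\beta}\}$ forces the score gap to exceed the threshold $\tau := 2\log|U| / \log((1-\epsilon)/\epsilon)$ in the ``wrong'' direction, and then to bound this probability using the martingale inequality from Lemma \ref{lemma:M_martingale}. Let $u$ denote the (random) vertex $\widehat{v}(T^{\alpha\beta})$, so that $T^{\alpha\beta} = T^{\alpha\beta}(u)$. By the stopping rule \eqref{eq:Tv_stopping_time},
\[
Z_u(T^{\alpha\beta}(u)) - Z_w(T^{\alpha\beta}(u)) \ge \tau
\]
for every $w \in U$ with $\dist(w,u) \ge (\alpha+\beta) T^{\alpha\beta}(u)$. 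Under the bad event, $v$ itself satisfies this distance condition, so the event is contained in $\bigcup_{u \in U} \cA_u$, where
\[
\cA_u := \{ Z_v(T^{\alpha\beta}(u)) - Z_u(T^{\alpha\beta}(u)) \le -\tau, \; T^{\alpha\beta}(u) < \dist(u,v)/(\alpha+\beta) \}.
\]

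Next I would apply Lemma \ref{lemma:M_martingale}. Since that lemma requires an almost surely bounded stopping time, I introduce the truncated stopping time $T'(u) := T^{\alpha\beta}(u) \wedge \lfloor \dist(u,v)/(\alpha+\beta) \rfloor$, which deterministically satisfies $T'(u) \le \dist(u,v)/(\alpha+\beta)$ and agrees with $T^{\alpha\beta}(u)$ on $\cA_u$. Hence $\cA_u \subseteq \{Z_v(T'(u)) - Z_u(T'(u)) \le -\tau\}$, and Lemma \ref{lemma:M_martingale} gives
\[
\p_v(\cA_u \vert \cE_k) \le 2 \left(\frac{\epsilon}{1-\epsilon}\right)^{\tau - 2 f_v^\alpha(k)} = 2 \left(\frac{1-\epsilon}{\epsilon}\right)^{2 f_v^\alpha(k)} |U|^{-2},
\]
where the last equality uses $(\epsilon/(1-\epsilon))^\tau = |U|^{-2}$ from the definition of $\tau$. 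A union bound over $u \in U$ then yields the claim with $c_{k,\epsilon} := 2\bigl((1-\epsilon)/\epsilon\bigr)^{2 f_v^\alpha(k)}$; note that $f_v^\alpha(k) \le 2(k+1) \Delta^{\alpha k}$ depends only on $k$ and the graph (via $\Delta$ and $\alpha$), so $c_{k,\epsilon}$ is independent of $|U|$ as required.

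The main obstacle is the truncation step: one must verify that replacing $T^{\alpha\beta}(u)$ with the bounded $T'(u)$ is harmless on the bad event, which is exactly where the strict inequality $T^{\alpha\beta}(u) < \dist(u,v)/(\alpha+\beta)$ (implied by $\dist(v,u) > (\alpha+\beta) T^{\alpha\beta}$) is used. Aside from this, the argument is a direct application of the existing tools: the thresholding rule converts the bad event into a large negative fluctuation of $Z_v - Z_u$, and Lemma \ref{lemma:M_martingale} -- tuned by the precise choice of $\tau$ -- gives the $|U|^{-2}$ per-vertex tail that absorbs the $|U|$ factor from the union bound, leaving the desired $|U|^{-1}$ rate.
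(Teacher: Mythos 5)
Your proof is correct and follows essentially the same route as the paper: decompose the bad event over possible values of $\widehat{v}^{\alpha\beta}$, observe that the stopping rule forces $Z_v - Z_u \le -\tau$ when $u = \widehat{v}^{\alpha\beta}$ is too far from $v$, truncate the stopping time so Lemma~\ref{lemma:M_martingale} applies, and union bound. Your version is slightly more careful than the paper's in a few harmless ways — you take a floor to keep the truncated stopping time integer-valued, you retain the prefactor $2$ and the $2f_v^\alpha(k)$ exponent from Lemma~\ref{lemma:M_martingale} exactly, and you note that $f_v^\alpha(k) \le 2(k+1)\Delta^{\alpha k}$ so the constant is genuinely $v$-independent — but none of these changes the argument's structure.
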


\begin{proof}
We start by writing
\begin{multline}
\label{eq:expected_error_pt2}
\p_v \left( \left. \dist(v, \widehat{v}^{\alpha \beta} ) \ge (\alpha + \beta) T^{\alpha \beta} \right \vert \cE_k \right) \\
= \sum_{u \in U} \p_v \left( \left. \widehat{v}^{\alpha \beta} = u, T^{\alpha \beta} < \frac{ \dist(v,u)}{\alpha + \beta} \right \vert \cE_k \right).
\end{multline}
To analyze the probabilities in the summation, let us define the stopping time $T_{vu} : = \max \{ T^{\alpha \beta}, \dist(v,u) / ( \alpha + \beta) \}$. Then we can bound
\begin{align}
 \p_v & \left( \left. \widehat{v}^{\alpha \beta} = u, T^{\alpha \beta} < \frac{ \dist(v,u) }{\alpha + \beta} \right \vert \cE_k \right) \nonumber \\
& \le \p_v ( Z_v(T_{vu}) - Z_u(T_{vu}) \le - \tau \vert \cE_k ) \nonumber \\
\label{eq:expected_error_probability_bound}
& \le \left( \frac{\epsilon}{1 - \epsilon} \right)^{\tau - f_v^\alpha(k) } = : \left( \frac{1-\epsilon}{\epsilon} \right)^{f_v^\alpha(k)} |U|^{-2}.
\end{align}
Above, the first inequality follows since $T_{vu} = T^{\alpha \beta}$ when $T^{\alpha \beta} < \dist(u,v) / (\alpha + \beta)$, and the second inequality is a consequence of Lemma \ref{lemma:M_martingale}. In the final expression, we substitute $\tau = 2 \log |U| / \log ( (1 - \epsilon) / \epsilon)$.
Finally, substituting the bound in \eqref{eq:expected_error_probability_bound} into \eqref{eq:expected_error_pt2} shows that the probability of interest is at most $c_{k,\epsilon} |U|^{-1}$, where $c_{k,\epsilon} : = ((1 - \epsilon)/\epsilon)^{f_v^\alpha(k)}$.
\end{proof}

\begin{lemma}
\label{lemma:cascade_estimator}
Recall that 
\[
\widehat{\cascade}(T^{\alpha \beta}) := \cN_{\widehat{v}(T^{\alpha \beta})} ( (\alpha + 2 \beta) T^{\alpha \beta} ).
\]
For $k$ sufficiently large, and all $|U|$ sufficiently larger than $k$, it holds that 
\[
\p \left( \left.  \cascade(T^{\alpha \beta}) \not \subseteq \widehat{\cascade}(T^{\alpha \beta} ) \right \vert \cE_k \right) \le c_{k,\epsilon} |U|^{-1},
\]
where $c_{k,\epsilon}$ is the constant from Lemma \ref{lemma:estimation_tail_bound}.
\end{lemma}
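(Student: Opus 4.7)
The plan is to show that, on the event $\cE_k$, the failure of the inclusion $\cascade(T^{\alpha \beta}) \subseteq \widehat{\cascade}(T^{\alpha \beta})$ must be caused by one of two auxiliary bad events, each of which is separately controllable. Specifically, introduce
\[
A_1 := \{T^{\alpha \beta} < k\}, \qquad A_2 := \{\dist(v^*, \widehat{v}(T^{\alpha \beta})) > (\alpha + \beta) T^{\alpha \beta}\}.
\]

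First I would establish a purely deterministic implication: on $\cE_k \cap A_1^c \cap A_2^c$ the desired inclusion holds. Since $T^{\alpha \beta} \ge k$ on $A_1^c$, the upper-half statement in $\cE_k$ gives $\cascade(T^{\alpha \beta}) \subseteq \cN_{v^*}(\beta T^{\alpha \beta})$. Combining this with the bound $\dist(v^*, \widehat{v}(T^{\alpha \beta})) \le (\alpha + \beta) T^{\alpha \beta}$ from $A_2^c$ via the triangle inequality yields $\dist(\widehat{v}(T^{\alpha \beta}), w) \le (\alpha + 2\beta) T^{\alpha \beta}$ for every $w \in \cascade(T^{\alpha \beta})$, i.e., $w \in \widehat{\cascade}(T^{\alpha \beta})$.

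Next I would bound $\p(A_1 \vert \cE_k)$ and $\p(A_2 \vert \cE_k)$ separately. The second is exactly what Lemma~\ref{lemma:estimation_tail_bound} provides, yielding $\p(A_2 \vert \cE_k) \le c_{k,\epsilon} |U|^{-1}$. For the first, I claim that $A_1$ is in fact \emph{impossible} for $|U|$ sufficiently larger than $k$, and this is precisely where the quantifier ``for all $|U|$ sufficiently larger than $k$'' in the lemma statement enters. The argument is elementary: since $|Y_w(s)| \le 1$ deterministically, one has $|Z_v(t) - Z_u(t)| \le f_v^\alpha(t) + f_u^\alpha(t)$ for all $u,v \in U$ and all $t \ge 0$. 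For $t < k$, the right-hand side is uniformly bounded by a constant depending only on $k,\alpha,\Delta$ (using the trivial volume bound $|\cN_w(\alpha s)| \le \Delta^{\lceil \alpha s \rceil + 1}$). On the other hand, the stopping threshold $2 \log |U| / \log((1-\epsilon)/\epsilon)$ appearing in \eqref{eq:Tv_stopping_time} grows to infinity with $|U|$, so for $|U|$ large enough (depending on $k$ and $\epsilon$) this threshold exceeds the constant above, and no $T^{\alpha \beta}(v)$ can halt before time $k$.

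Combining the two bounds via a union bound gives $\p(\cascade(T^{\alpha \beta}) \not\subseteq \widehat{\cascade}(T^{\alpha \beta}) \vert \cE_k) \le c_{k,\epsilon} |U|^{-1}$, as required. The main obstacle is merely bookkeeping in the deterministic argument for $A_1$: one must verify that $\max_{v \in U} f_v^\alpha(k)$ can be controlled by a constant depending only on $(k,\alpha,\Delta)$, independently of $|U|$, so that the threshold crossover is correctly quantified. All of the probabilistic content is inherited from Lemma~\ref{lemma:estimation_tail_bound}.
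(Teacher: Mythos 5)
Your proof is correct and follows essentially the same approach as the paper: the paper also (i) observes that $T^{\alpha\beta}\ge k$ deterministically once $|U|$ is large, because $|Z_v(t)-Z_u(t)|\le f_v^\alpha(t)+f_u^\alpha(t)$ stays below the growing threshold $2\log|U|/\log((1-\epsilon)/\epsilon)$ for $t\le k$; and (ii) on $\cE_k$ applies $\cascade(T^{\alpha\beta})\subseteq\cN_{v^*}(\beta T^{\alpha\beta})$ together with the triangle inequality to reduce the bad event to $\dist(v^*,\widehat{v}(T^{\alpha\beta}))>(\alpha+\beta)T^{\alpha\beta}$, which Lemma~\ref{lemma:estimation_tail_bound} controls. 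Your reorganization into the explicit events $A_1,A_2$ and a union bound is only a cosmetic repackaging of the same argument.
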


\begin{proof}
Observe that, for $k$ fixed, we have that $T^{\alpha \beta} \ge k$ when $|U|$ is sufficiently large. Indeed, for any $u,v \in U$, we have that $|Z_v(t) - Z_u(t) | \le f_v^\alpha(t) + f_u^\alpha(t)$ as all the observed signals are at most 1 in absolute value. Hence, for all $0 \le t \le k$, $|Z_v(t) - Z_u(t) | \le f_v^\alpha(k) + f_u^\alpha(k)$, which is smaller than the threshold $2 \log |U| / \log ( (1 - \epsilon ) / \epsilon )$ for $|U|$ sufficiently large. It follows immediately that $T^{\alpha \beta}$ must be larger than $k$.

Next, for $k$ large enough, we have that 
\begin{align*}
\p_v&  \left( \left. |\cascade(T^{\alpha \beta}) \setminus \widehat{\cascade}(T^{\alpha \beta}) | \ge 1 \right \vert \cE_k \right) \\
& \stackrel{(a)}{\le} \p_v \left( \left.  \exists u \in \cN_v( \beta T^{\alpha \beta} ) \setminus \cN_{\widehat{v}^{\alpha \beta}} ( (\alpha + 2 \beta) T^{\alpha \beta} ) \right \vert \cE_k \right) \\
& \stackrel{(b)}{\le} \p_v \left( \left. \dist ( v, \widehat{v}^{\alpha \beta} ) \ge ( \alpha + \beta) T^{\alpha \beta} \right \vert \cE_k \right) \\
& \stackrel{(c)}{\le} c_{k,\epsilon} |U|^{-1}.
\end{align*}
Above, $(a)$ follows since $\cascade(t) \subseteq \cN_v(\beta t)$ for all $t \ge k$ on $\cE_k$ when $v$ is the source; 
$(b)$ is the due to the triangle inequality;
and $(c)$ is a consequence of Lemma \ref{lemma:estimation_tail_bound}.
\end{proof}

\begin{proof}[Proof of Theorem \ref{thm:main_performance_analysis}]
Notice that for $k$ fixed but sufficiently large, the probability bounds in Lemmas \ref{lemma:T_tail_bound}, \ref{lemma:estimation_tail_bound}, and \ref{lemma:cascade_estimator} tend to zero as $|U| \to 0$. We conclude by noting that $\p( \cE_k) \to 1$ by Proposition \ref{prop:Ek}, so we may take $k$ to increase at an arbitrarily slow rate with $|U|$ to remove the conditioning on $\cE_k$ in Lemmas \ref{lemma:T_tail_bound}, \ref{lemma:estimation_tail_bound} and \ref{lemma:cascade_estimator}. The theorem now follows from a union bound.
\end{proof}

\begin{proof}[Proof of Corollary \ref{cor:cascade_estimator_size}]
By the form of the cascade estimator (see \eqref{eq:cascade_estimator}) and by Item \ref{item:thm_dist} of Theorem \ref{thm:main_performance_analysis}, we have that $\widehat{\cascade}(T^{\alpha \beta})$ is contained within $\cN_{v^*} ( (\alpha + \beta) T^{\alpha \beta} )$ with probability tending to 1 as $|U| \to \infty$. Moreover, it holds that
\begin{align*}
\left | \widehat{\cascade}(T^{\alpha \beta} )\right| &  \stackrel{(a)}{\le} \left|  \cN_{v^*} \left( (\alpha + \beta) F_{v^*}^\alpha \left( \frac{ 15 \log |U| }{p (1 - 2 \epsilon)^2 } \right) \right) \right| \\
& \stackrel{(b)}{\le} \left( q \left| \cN_{v^*} \left( \alpha F_v^\alpha \left( \frac{15 \log |U| }{p (1 - 2 \epsilon)^2 } \right) \right) \right| \right)^{r (\alpha + \beta)/\alpha} \\
& \stackrel{(c)}{\le} \left( q f_v^\alpha \left( F_v^\alpha \left( \frac{15 \log |U| }{p (1 - 2 \epsilon)^2 } \right) \right) \right)^{r ( \alpha + \beta) / \alpha} \\
& \stackrel{(d)}{=} \left( \frac{15 q \log |U| }{p (1 - 2 \epsilon)^2} \right)^{r ( \alpha + \beta) / \alpha}.
\end{align*}
Above, $(a)$ follows from Item \ref{item:thm_T} of Theorem \ref{thm:main_performance_analysis};
$(b)$ is due to Lemma \ref{lemma:neighborhood_power_bound};
$(c)$ follows since $f_v^\alpha(t) \ge \cN_v(\alpha t)$ for any $t \ge 0$ and $v \in V(G)$;
and $(d)$ uses that $F_v^\alpha$ is the inverse function of $f_v^\alpha$.
The desired claim follows from noting that the final bound is at most $\log^c |U|$ when $c > r (\alpha + \beta ) / \alpha$.
\end{proof}

\section{Basic graph-theoretic results}

In this appendix, we prove a few elementary but useful graph-theoretic results. 

\begin{lemma}
\label{lemma:neighborhood_bound}
Suppose that $G$ is connected and has maximum degree $\Delta$. Then for all $v \in V(G)$ and all $t \ge 0$, it holds that $| \cN_v(t) | \le 2 \Delta^{t}$.
\end{lemma}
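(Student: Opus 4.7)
The plan is to decompose $\cN_v(t)$ into level sets by exact distance from $v$, control the growth of each level set by the maximum degree, and then collapse the resulting geometric sum into the claimed bound.

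First, since graph distances take integer values, I would note that $\cN_v(t) = \cN_v(\lfloor t \rfloor)$, so it suffices to prove the bound for integer $t$. Write $L_k := \{u \in V(G) : \dist(u,v) = k\}$, so that $\cN_v(t) = \bigcup_{k=0}^{t} L_k$ and these sets are disjoint. Every vertex $u \in L_k$ (for $k \ge 1$) has at least one neighbor in $L_{k-1}$ on a shortest path from $v$ to $u$; conversely, each vertex in $L_{k-1}$ has at most $\Delta$ neighbors. Hence the map sending $u \in L_k$ to any chosen $L_{k-1}$-neighbor is at most $\Delta$-to-one, which yields the recursion $|L_k| \le \Delta \, |L_{k-1}|$. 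Combined with $|L_0| = 1$, induction gives $|L_k| \le \Delta^k$ for every integer $k \ge 0$.

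Summing the level sets and applying the geometric series formula,
\[
|\cN_v(t)| \;\le\; \sum_{k=0}^{\lfloor t \rfloor} \Delta^k \;=\; \frac{\Delta^{\lfloor t \rfloor + 1} - 1}{\Delta - 1} \;\le\; \frac{\Delta}{\Delta - 1}\,\Delta^{\lfloor t \rfloor}.
\]
For $\Delta \ge 2$ the prefactor $\Delta / (\Delta - 1)$ is at most $2$, and $\Delta^{\lfloor t \rfloor} \le \Delta^t$, so $|\cN_v(t)| \le 2\Delta^t$. The edge cases $\Delta \in \{0,1\}$ correspond to graphs with at most two vertices (since $G$ is connected), in which case $|\cN_v(t)| \le 2$ trivially and the bound reduces to $2 \cdot 1^t$ or $2 \cdot 0^0 = 2$ as appropriate; I would just observe that the $\Delta \ge 2$ regime is the only one of interest.

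There is really no hard step here: the argument is a standard BFS-layering bound. The only mild care needed is handling non-integer $t$ (resolved by the floor identity above) and absorbing the $\Delta/(\Delta-1)$ factor into the constant $2$, both of which are routine.
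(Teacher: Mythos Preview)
Your proof is correct and follows essentially the same approach as the paper: decompose into distance-$k$ shells, bound $|L_k| \le \Delta^k$ by the degree recursion, sum the geometric series, and absorb $\Delta/(\Delta-1)$ into the constant $2$ using $\Delta \ge 2$. You are in fact slightly more careful than the paper in explicitly reducing to integer $t$ via $\cN_v(t) = \cN_v(\lfloor t\rfloor)$ and in treating the degenerate cases $\Delta \in \{0,1\}$ separately.
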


\begin{proof}
Let $\partial \cN_v(t)$ denote the set of vertices of distance \emph{exactly} $t$ from $v$. Then we have the inequality $| \partial \cN_v(t) | \le \Delta | \partial \cN_v(t-1)|$ for all $t \ge 1$, which implies that $| \partial \cN_v(t) | \le \Delta^t$. We then have that
\begin{align*}
| \cN_v(t) | & = \sum_{s = 0}^t | \partial \cN_v(s) |  \le \sum_{s = 0}^t \Delta^s \\
& = \frac{\Delta^{t + 1} - 1}{\Delta - 1} \le \left( \frac{\Delta}{\Delta - 1} \right) \Delta^t.
\end{align*}
Since $G$ is connected, $\Delta \ge 2$. As a result, $\Delta / (\Delta - 1) \le 2$, which proves the desired statement.
\end{proof}

\begin{lemma}
\label{lemma:f_comparison}
Let $\alpha > 0$ and $v \in V(G)$.
For any $1 \le t_1 \le t_2$, it holds that
\[
f_v^\alpha(t_1) \le \frac{2t_1 }{t_2} f_v^\alpha(t_2).
\]
\end{lemma}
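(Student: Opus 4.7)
The approach is purely elementary, relying only on the fact that the sequence $\{|\cN_v(\alpha s)|\}_{s \ge 0}$ is non-decreasing in $s$ (since $\cN_v(r) \subseteq \cN_v(r')$ whenever $r \le r'$).

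First I would observe that, for any non-decreasing sequence of non-negative reals $a_0 \le a_1 \le a_2 \le \cdots$, the running arithmetic mean $M_t := (a_0 + a_1 + \cdots + a_t)/(t+1)$ is itself non-decreasing in $t$. This is the standard one-line calculation: $M_{t+1} - M_t = (a_{t+1} - M_t)/(t+2) \ge 0$ because $a_{t+1}$ dominates every earlier term in the average. Applying this to $a_s = |\cN_v(\alpha s)|$ yields
\[
\frac{f_v^\alpha(t_1)}{t_1 + 1} \le \frac{f_v^\alpha(t_2)}{t_2 + 1}
\]
for every pair of non-negative integers $t_1 \le t_2$.

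The remaining step is to replace the factor $(t_1+1)/(t_2+1)$ by $2 t_1 / t_2$. I would just verify the inequality $(t_1+1)/(t_2+1) \le 2 t_1 / t_2$ directly: clearing denominators reduces it to $t_2 \le t_1 t_2 + 2 t_1$, which is immediate from $t_1 \ge 1$ since $t_1 t_2 \ge t_2$. Combining this with the previous display gives the stated bound $f_v^\alpha(t_1) \le (2 t_1/t_2) f_v^\alpha(t_2)$.

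There is no real obstacle here; the only content is noticing that the running average of a monotone sequence is itself monotone, and the constant $2$ in the statement is exactly the slack needed to convert $(t_1+1)/(t_2+1)$ into $t_1/t_2$ under the hypothesis $t_1 \ge 1$. If $t_1, t_2$ were allowed to range over reals rather than integers, the same proof would go through via an integral comparison, but this is not needed here since $f_v^\alpha$ is defined only at integer arguments.
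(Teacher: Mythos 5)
Your proof is correct and takes essentially the same route as the paper's: both argue that the running average $f_v^\alpha(t)/(t+1)$ is non-decreasing because the summands $|\cN_v(\alpha s)|$ are non-decreasing, and then convert $(t_1+1)/(t_2+1)$ into $2t_1/t_2$ using $t_1 \ge 1$. Your added one-line justification of why the running mean of a monotone sequence is monotone is a nice touch but does not change the substance.
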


\begin{proof}
Since $f_v^\alpha(t)$ is a sum of increasing terms, we have that $f_v^\alpha (t_1) / (t_1 + 1) \le f_v^\alpha(t_2) / (t_2 + 1)$. It follows that 
\[
f_v^\alpha(t_1) \le \frac{t_1 + 1}{t_2 + 1} f_v^\alpha(t_2).
\]
The desired result follows since $t_1 + 1 \le 2t_1$ and $t_2 + 1 \ge t_2$.
\end{proof}

\begin{lemma}
\label{lemma:f_difference}
For any $0 \le t_1 \le t_2$, it holds that $f_v^\alpha(t_2) - f_v^\alpha(t_1) \ge t_2 - t_1$.
\end{lemma}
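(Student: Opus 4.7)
The plan is to unfold the definition of $f_v^\alpha$ and recognize the increment as a telescoping sum in which each summand can be bounded below by $1$.

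First I would take $0 \le t_1 \le t_2$ with $t_1, t_2 \in \Z_{\ge 0}$ (the values on which $f_v^\alpha$ is defined) and write
\[
f_v^\alpha(t_2) - f_v^\alpha(t_1) = \sum_{s = t_1 + 1}^{t_2} |\cN_v(\alpha s)|.
\]
Next I would invoke the elementary observation that $v \in \cN_v(\alpha s)$ for every $s \ge 0$, since $\dist(v,v) = 0 \le \alpha s$. Hence each summand satisfies $|\cN_v(\alpha s)| \ge 1$.

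The sum contains exactly $t_2 - t_1$ terms, so the claimed inequality $f_v^\alpha(t_2) - f_v^\alpha(t_1) \ge t_2 - t_1$ is immediate. There is no real obstacle here; the argument is a direct counting bound that only uses the trivial lower bound $|\cN_v(r)| \ge 1$ for any radius $r \ge 0$, together with the fact that the range of summation contains $t_2 - t_1$ integers.
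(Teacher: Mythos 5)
Your proof is correct and takes essentially the same approach as the paper: rewrite the difference as $\sum_{s = t_1+1}^{t_2} |\cN_v(\alpha s)|$ and bound each of the $t_2 - t_1$ summands below by $1$. You merely add the (correct, but minor) explicit remark that $v \in \cN_v(\alpha s)$ to justify $|\cN_v(\alpha s)| \ge 1$.
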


\begin{proof}
Since $|\cN_v(\alpha s) | \ge 1$ for all $s \ge 0$, it holds that
\[
f_v^\alpha(t_2) - f_v^\alpha(t_1) = \sum_{s = t_1 + 1}^{t_2} | \cN_v(\alpha s) | \ge t_2 - t_1.
\]
\end{proof}

\begin{lemma}
\label{lemma:Fv_asymptotics}
As $z \to \infty$, $F_v^\alpha(z) \sim F_v(\alpha z) / \alpha$.
\end{lemma}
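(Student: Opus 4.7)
The plan is to prove that $\alpha F_v^\alpha(z) - F_v(\alpha z)$ is bounded by a constant depending only on $\alpha$. Combined with the divergence $F_v(\alpha z) \to \infty$, this bounded gap is asymptotically negligible and yields $F_v^\alpha(z) \sim F_v(\alpha z)/\alpha$.

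First I would establish the two-sided sandwich
\[
\frac{1}{\alpha} f_v(\alpha t - C) \;\le\; f_v^\alpha(t) \;\le\; \frac{1}{\alpha} f_v(\alpha t + C)
\]
for all sufficiently large $t$ and a constant $C = C(\alpha) = O(1 + \alpha)$. The idea is to view $f_v^\alpha(t) = \sum_{s=0}^t |\cN_v(\alpha s)|$ as a Riemann sum of the non-decreasing step function $u \mapsto |\cN_v(\alpha u)|$ on $[0,t]$. Standard sum-integral comparisons for non-decreasing integrands give
\[
|\cN_v(0)| + \int_0^t |\cN_v(\alpha u)|\, du \;\le\; f_v^\alpha(t) \;\le\; |\cN_v(\alpha t)| + \int_0^t |\cN_v(\alpha u)|\, du,
\]
and the change of variables $r = \alpha u$ turns the integral into $\frac{1}{\alpha}\int_0^{\alpha t} |\cN_v(\lfloor r \rfloor)|\, dr$, which (the integrand being a step function in $r$) evaluates to $\frac{1}{\alpha} f_v(\alpha t)$ up to a boundary term of size at most $|\cN_v(\alpha t)|/\alpha$. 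Monotonicity of $|\cN_v|$ then absorbs the remaining $|\cN_v(\alpha t)|$ term from either Riemann bound into the $f_v$-increment $\frac{1}{\alpha}[f_v(\alpha t + C) - f_v(\alpha t)] \ge (\lfloor C \rfloor/\alpha)\,|\cN_v(\alpha t)|$, which dominates once $C$ is taken larger than $\alpha + 1$.

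Next I would invert the sandwich. Setting $T = F_v^\alpha(z)$, so that $f_v^\alpha(T) \ge z > f_v^\alpha(T-1)$, the upper bound at $T$ forces $f_v(\alpha T + C) \ge \alpha z$, hence $\alpha T + C \ge F_v(\alpha z)$, while the lower bound at $T-1$ forces $f_v(\alpha(T-1) - C) < \alpha z$, hence $\alpha(T-1) - C < F_v(\alpha z)$. Combining these yields
\[
F_v(\alpha z) - C \;\le\; \alpha F_v^\alpha(z) \;\le\; F_v(\alpha z) + \alpha + C.
\]
Because $G$ is infinite and connected, $|\cN_v(r)| \ge 1$ for all $r$ and $f_v(m) \to \infty$, so $F_v(\alpha z) \to \infty$ as $z \to \infty$. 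The additive error $O(\alpha + C)$ is therefore negligible relative to $F_v(\alpha z)$, and $\alpha F_v^\alpha(z)/F_v(\alpha z) \to 1$.

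The main obstacle is the bookkeeping in the first step for non-integer $\alpha$: the map $s \mapsto \lfloor \alpha s \rfloor$ can skip integer radii (if $\alpha > 1$) or revisit them (if $\alpha < 1$), so a direct term-by-term comparison with $f_v(\alpha t)$ cannot recover the factor $1/\alpha$ exactly, nor is it the case in general that $\alpha f_v^\alpha(t) \sim f_v(\alpha t)$ at the level of the forward functions (as can be seen on a regular tree, where the sums involved are geometric and leave a constant multiplicative gap). The Riemann-integral representation cleanly packages these discrepancies into the bounded shift $C$ in the argument of $f_v$, which is harmless after inversion regardless of whether $F_v$ is polynomial-like or logarithmic-like.
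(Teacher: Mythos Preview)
Your argument is correct and rests on the same core observation as the paper's: the change of variables $r=\alpha u$ in the Riemann integral $\int_0^t|\cN_v(\alpha u)|\,du$ produces the factor $1/\alpha$ and relates the two cumulative sums. The execution differs, however. The paper passes to continuous interpolants $\tilde f_v^\alpha,\tilde f_v$, argues the \emph{exact} identity $\tilde f_v^\alpha(t)=\tfrac{1}{\alpha}\tilde f_v(\alpha t)$ via that substitution, inverts it to $\tilde F_v^\alpha(z)=\tilde F_v(\alpha z)/\alpha$, and then invokes the at-most-linear growth of $F_v^\alpha,F_v$ (Lemma~\ref{lemma:f_difference}) to transfer the identity back to the discrete inverses asymptotically. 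You instead stay in the discrete world throughout, establishing the two-sided sandwich $\tfrac{1}{\alpha}f_v(\alpha t - C)\le f_v^\alpha(t)\le \tfrac{1}{\alpha}f_v(\alpha t + C)$ with a bounded shift $C=O(1+\alpha)$ and then inverting directly. Your route is slightly more hands-on but has the virtue of making explicit the point you flag at the end: the forward relation $\alpha f_v^\alpha(t)\sim f_v(\alpha t)$ can genuinely fail by a constant factor (e.g.\ on regular trees), and it is only the bounded \emph{argument} shift, not a multiplicative correction, that survives inversion cleanly. The paper's interpolation device achieves the same effect but leaves that subtlety implicit.
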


\begin{proof}
Let $\tilde{f}_v^\alpha(t)$ and $\tilde{f}_v(t)$ denote the continuous-time versions of $f_v^\alpha(t)$ and $f_v(t)$ (which are well-defined for $t \in \Z_{\ge 0}$) formed by linear interpolation. 
To relate $\tilde{f}_v^\alpha$ and $\tilde{f}_v$, we can write
\begin{equation}
\label{eq:fv_alpha_relation}
\tilde{f}_v^\alpha(t) = \int_0^t | \cN_v(\alpha s) | ds = \frac{1}{\alpha} \int_0^{\alpha t} | \cN_v(\theta) | d\theta = \frac{1}{\alpha} \tilde{f}_v(\alpha t).
\end{equation}
Next, define $\tilde{F}_v^\alpha$ and $\tilde{F}_v$ to be the inverse functions of $\tilde{f}_v^\alpha$ and $\tilde{f}_v$, respectively. It holds for any $t \ge 0$ that 
\[
\frac{1}{\alpha} \tilde{F}_v( \alpha \tilde{f}_v^\alpha(t) ) = \frac{1}{\alpha} \tilde{F}_v ( \tilde{f}_v(\alpha t) ) = t = \tilde{F}_v^\alpha( \tilde{f}_v^\alpha(t) ),
\]
where the first equality is due to the relation \eqref{eq:fv_alpha_relation}. As $\tilde{f}_v^\alpha$ is a continuous function, it follows that for any $z \ge 0$, $\tilde{F}_v^\alpha(z) = \tilde{F}_v( \alpha z) / \alpha$.

Finally, notice that $\tilde{F}_v^\alpha$ and $\tilde{F}_v$ are linearly interpolated versions of $F_v^\alpha$ and $F_v$. Moreover, $\tilde{F}_v^\alpha$ and $\tilde{F}_v$ grow at most linearly in light of Lemma \ref{lemma:f_difference}. Hence $F_v^\alpha(z) \sim \tilde{F}_v^\alpha(z)$ and $F_v(z) \sim \tilde{F}_v(z)$ as $z \to \infty$, and the desired result follows. 
\end{proof}

\begin{lemma}
\label{lemma:neighborhood_power_bound}
Assume that the condition \eqref{eq:structural_constraint} holds. Then for non-negative integers $t$ and $k$, it holds for any $v \in V(G)$ that $| \cN_v(kt) | \le ( q | \cN_v(t) |^r)^k$.
\end{lemma}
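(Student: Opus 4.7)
The plan is to proceed by induction on $k$, using the standard ``covering'' identity that a $kt$-neighborhood can be covered by $t$-neighborhoods centered at vertices in a $(k-1)t$-neighborhood. The base case $k=1$ is immediate: taking $u = v$ in the hypothesis \eqref{eq:structural_constraint} gives $|\cN_v(t)| \le q |\cN_v(t)|^r$, which holds because $q, r \ge 1$ and $|\cN_v(t)| \ge 1$.

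For the inductive step, assume the bound holds for $k$ and consider $\cN_v((k+1)t)$. First I would establish the set-theoretic inclusion
\[
\cN_v((k+1)t) \;\subseteq\; \bigcup_{u \in \cN_v(kt)} \cN_u(t).
\]
This follows from a shortest-path argument: if $\dist(v,w) \le (k+1)t$, then along a shortest $v$-to-$w$ path pick the vertex $u$ at distance $\min(\dist(v,w), kt)$ from $v$; this $u$ lies in $\cN_v(kt)$, and the remaining portion of the path has length at most $t$, so $w \in \cN_u(t)$.

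Taking cardinalities and applying the union bound,
\[
|\cN_v((k+1)t)| \;\le\; \sum_{u \in \cN_v(kt)} |\cN_u(t)| \;\le\; |\cN_v(kt)| \cdot \max_{u \in V(G)} |\cN_u(t)|.
\]
The hypothesis \eqref{eq:structural_constraint} bounds $\max_u |\cN_u(t)|$ by $q |\cN_v(t)|^r$, and the inductive hypothesis bounds $|\cN_v(kt)|$ by $(q |\cN_v(t)|^r)^k$. Multiplying the two yields $(q |\cN_v(t)|^r)^{k+1}$, completing the induction.

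There is no real obstacle here; the only small care needed is the covering inclusion, which is routine. Everything else is a one-line union bound combined with the two given inequalities.
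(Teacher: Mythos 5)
Your proof is correct and follows essentially the same route as the paper: the same covering inclusion $\cN_v((k+1)t) \subseteq \bigcup_{u \in \cN_v(kt)} \cN_u(t)$, the same union bound, and the same application of \eqref{eq:structural_constraint}, with only a cosmetic difference in the choice of base case ($k=1$ versus the paper's $k=0$ with $|\cN_v(0)|=1$).
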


\begin{proof}
Let $k \ge 0$ be an integer. Noting that $\cN_v( (k + 1) t) \subseteq \bigcup_{u \in \cN_v(kt)} \cN_u(t)$, it holds that
\begin{multline*}
| \cN_v((k + 1)t) | \le \sum_{u \in \cN_v(kt)} | \cN_u(t) | \le  q | \cN_v(t) |^r | \cN_v(kt) |,
\end{multline*}
where the first inequality is due to a union bound, and the second is due to \eqref{eq:structural_constraint}. Solving this recursion with the initial condition $| \cN_v(0) | = 1$ proves the lemma.
\end{proof}

\end{document}